\newtheorem{theorem}{Theorem}[section]
\newtheorem{corollary}[theorem]{Corollary}
\newtheorem{remark}[theorem]{Remark}
\newtheorem{lemma}[theorem]{Lemma}
\newtheorem{claim}{Claim}[section]
\newenvironment{claimproof}[1]{{\it\noindent{Proof.}}\space#1}{\footnotesize \hfill \ensuremath{(\square)} \medskip}
\newcommand{\gc}{\gamma_{con}}
\newcommand{\gi}{\gamma_{iso}}
\begin{document}

\title{Convex and isometric domination of (weak) dominating pair graphs}

\author{
Bo\v stjan Bre\v sar$^{a,b}$ \and
Tanja Gologranc$^{a,b}$ \and Tim Kos$^{b}$ }

\maketitle

\begin{center}
$^a$ Faculty of Natural Sciences and Mathematics, University of Maribor, Slovenia\\
\medskip

$^b$ Institute of Mathematics, Physics and Mechanics, Ljubljana, Slovenia\\
\medskip

\end{center}

\begin{abstract}
A set $D$ of vertices in a graph $G$ is a dominating set if every vertex of $G$, which is not in $D$, has a neighbor in $D$. A set of vertices $D$ in $G$ is convex (respectively, isometric), if all vertices in all shortest paths (respectively, all vertices in one of the shortest paths) between any two vertices in $D$ lie in $D$. The problem of finding a minimum convex dominating (respectively, isometric dominating) set is considered in this paper from algorithmic point of view. For the class of weak dominating pair graphs (i.e.,~the graphs that contain a dominating pair, which is a pair of vertices $x,y\in V(G)$ such that vertices of any path between $x$ and $y$ form a dominating set), we present an efficient algorithm that finds a minimum isometric dominating set of such a graph. On the other hand, we prove that even if one restricts to weak dominating pair graphs that are also chordal graphs, the problem of deciding whether there exists a convex dominating set bounded by a given arbitrary positive integer is NP-complete. By further restricting the class of graphs to chordal dominating pair graphs (i.e.,~the chordal graphs in which every connected induced subgraph has a dominating pair) we are able to find a polynomial time algorithm that determines the minimum size of a convex dominating set of such a graph.
\end{abstract}

\noindent
{\bf Keywords:} convex domination; dominating pair graph; isometric domination; convex hull  \\

\noindent
{\bf AMS Subj.\ Class.\ (2010)}: 05C85, 05C69, 05C12, 68E10

\section{Introduction}

Domination theory is one of the classical and most studied topics of graph theory; it was surveyed in two monographs that were published almost twenty years ago~\cite{hhs,hhs1}, and the theory has been extensively developed also in the last two decades. While in the basic version of domination, a {\em dominating set} $D$ is a set of vertices in a graph $G$ such that any vertex of $G$ not in $D$ has some neighbor in $D$, many variations of this concept have been introduced. In particular, in the so-called  {\em connected domination}, as introduced in~\cite{SW1979}, a dominating set $D$ is required to induce a connected subgraph. The idea reflects the requirements of potential applications, where vertices in $D$ represent locations/nodes of discrete network, in which monitoring devices are placed that monitor the nodes in their closed neighborhoods, and it is desirable that one can move between locations/nodes, which are in $D$, by passing only through location/nodes that are in $D$. In the more restrictive case in which the time of moving between different nodes in $D$ is also important, one can require that some shortest path between any two nodes in $D$ lies completely in $D$ (representing the so-called {\em weakly convex} or {\em isometric domination}); or, even more restrictively, that any shortest path between any two nodes in $D$ lies completely in $D$ (which then yields the so-called {\em convex domination}).

Two graph invariants appear in this context. The {\em convex domination number} of a graph $G$, $\gc(G)$, is the minimum cardinality of a set $D\subseteq V(G)$ such that $D$ is at the same time a dominating set and a convex set (recall that a set $D$ is convex if for any two vertices $x,y\in D$ all shortest $x,y$-paths lie in $D$). The {\em isometric domination number} of a graph $G$, $\gi(G)$, is the minimum cardinality of a set $D\subseteq V(G)$ such that $D$ is at the same time a dominating set and an isometric set, where the latter means that for any two vertices $x,y\in D$ there exists a shortest $x,y$-path that lies in $D$.  The study of convex domination and of isometric domination (introduced under then name  weak convex domination) was initiated in 2004 by Lema\'{n}ska~\cite{L2004} and Raczek~\cite{R2004}, and was further studied from different points of view in several papers~\cite{LC2012,L2010, LRG2012,RL2010}. Raczek proved that the decision versions of isometric and convex domination number of a graph are NP-complete, even for bipartite and split graphs~\cite{R2004} (and hence also for chordal graphs). In fact, determining these numbers in split graphs is easily seen to be equivalent to the {\sc Set Cover Problem}, one of the fundamental NP-complete problems due to Karp~\cite{karp}, see also~\cite{GJ1979}.

The algorithmic and complexity issues were investigated recently for several other convexity parameters~\cite{BCD,CDS,DPR}. The theory of convex sets in graphs and other discrete structures was surveyed in the monograph already in 1993~\cite{vvel-93}, and it encompasses several important results in metric graph theory. In this developed part of graph theory (see also a survey on metric graph theory and geometry~\cite{BaCh}) it is common to use the word {\em isometric subgraph} for a distance-preserving subgraph, while weak convexity usually refers to some form of convexity related to vertices of small distance. From this reason we suggest the name {\em isometric domination} instead of {\em weak domination}.

It is natural to consider these concepts in classes of graphs in which one can easily find nontrivial dominating sets, which are at the same time convex or isometric sets (nontrivial in this case means that the sets are not equal to $V(G)$). In particular, it is easy to see that removing all simplicial vertices in a chordal graph $G$, yields a subset of $V(G)$, which is convex and dominating. As mentioned above, the exact convex domination number is hard in split graphs and hence also in chordal graphs. Another interesting class of graphs in this context is that of asteroidal-triple-free graphs (AT-free graphs, for short); these graphs are defined as the graphs containing no {\em{asteroidal triples}}, i.e.\ independent sets of three vertices such that each pair is joined by a path that avoids the neighborhood of the third. The class of AT-free graphs contains many known classes of graphs such as interval, permutation, trapezoid, and co-comparability graphs, which have interesting geometric representations, and have also been in the focus of algorithmic graph theory, e.g.~see the monographs~\cite{bls-99,mcmc-99}. In~\cite{COS1997} Corneil, Olariu and Stewart presented the evidence that the absence of asteroidal triples imposes linearity of the recognition of the mentioned four classes. They also proved that AT-free graphs contain a {\em dominating pair}, that is, a pair of vertices with the property that every path connecting them is a dominating set. A linear time algorithm to find a dominating pair in AT-free graphs was presented in~\cite{COS2005}.

More generally, a graph is called a {\em weak dominating pair graph} if it contains a dominating pair, while a graph is called a {\em dominating pair graph} if each of its connected induced subgraphs is a weak dominating pair graph. Both graph classes contain AT-free graphs, and were introduced by Deogun and Kratsch in~\cite{DK2002}, where also a characterization of chordal dominating pair graphs using forbidden induced subgraphs was proven. In~\cite{PCK2004} it was shown that chordal dominating pair graphs can be recognized in polynomial time.

In this paper, we prove that convex domination problem is NP-complete when restricted to chordal weak dominating pair graphs (see Section~\ref{sec:weakDom}). On the other hand, we present in Section~\ref{sec:pairDom} a polynomial time algorithm to determine the convex domination number of an arbitrary chordal dominating pair graph. (As a corollary, the convex domination number of an interval graph can also be computed in polynomial time.)
Finally, in Section~\ref{sec:iso} we give a polynomial time algorithm to determine the isometric domination number of a (weak) dominating pair graph in which a dominating pair is also given. (Since one can determine a dominating pair in AT-free graphs in polynomial time, the problem of isometric domination number is polynomial in these graphs.) We conclude the introduction by remarking that results in these paper demonstrate that complexity behaviour of convex and isometric domination problems can be significantly different; see Figure~\ref{fig:classes} presenting the classes of graphs considered in this paper.

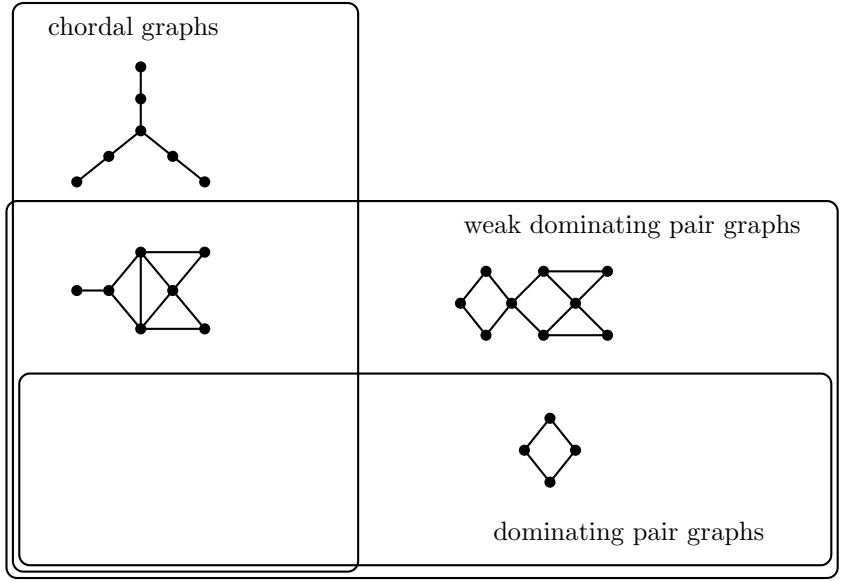
\begin{figure}[!ht]
	
	\centering
	\begin{tikzpicture}[scale=0.85, style=thick]
	\def\vr{3pt}
	\def\len{1}
	
	\draw[rounded corners] (0,0.5) rectangle (13,6.4); 
	\draw[rounded corners] (0.1,0.6) rectangle (5.5,9.5); 
	\draw[rounded corners] (0.2,0.7) rectangle (12.9,3.7); 
	
	\draw (7,6) node[right] {weak dominating pair graphs};
	\draw (12, 1.2) node[left] {dominating pair graphs};
	\draw (.5,9.1) node[right] {chordal graphs};

	
	
	\filldraw [fill=black, draw=black,thick] (1.1,6.7) circle (2pt);
	\filldraw [fill=black, draw=black,thick] (1.6,7.1) circle (2pt);
	\filldraw [fill=black, draw=black,thick] (2.1,7.5) circle (2pt);
	\filldraw [fill=black, draw=black,thick] (2.1,8) circle (2pt);
	\filldraw [fill=black, draw=black,thick] (2.1,8.5) circle (2pt);
	\filldraw [fill=black, draw=black,thick] (2.6,7.1) circle (2pt);
	\filldraw [fill=black, draw=black,thick] (3.1,6.7) circle (2pt);
	
	\draw (1.1,6.7) -- (2.1,7.5);
	\draw (2.1,7.5) -- (2.1,8.5);
	\draw (2.1,7.5) -- (3.1,6.7);
	
	
	\filldraw [fill=black, draw=black,thick] (1.1,5) circle (2pt);
	\filldraw [fill=black, draw=black,thick] (1.6,5) circle (2pt);
	\filldraw [fill=black, draw=black,thick] (2.1,5.6) circle (2pt);
	\filldraw [fill=black, draw=black,thick] (2.1,4.4) circle (2pt);
	\filldraw [fill=black, draw=black,thick] (2.6,5) circle (2pt);
	\filldraw [fill=black, draw=black,thick] (3.1,5.6) circle (2pt);
	\filldraw [fill=black, draw=black,thick] (3.1,4.4) circle (2pt);
	
	\draw (1.1,5) -- (1.6,5);
	\draw (2.1,5.6) -- (1.6,5);
	\draw (2.1,4.4) -- (1.6,5);
	\draw (2.1,5.6) -- (2.1,4.4);
	\draw (2.1,4.4) -- (2.6,5);
	\draw (2.1,5.6) -- (2.6,5);
	\draw (2.1,4.4) -- (3.1,4.4);
	\draw (2.1,5.6) -- (3.1,5.6);
	\draw (3.1,4.4) -- (2.6,5);
	\draw (3.1,5.6) -- (2.6,5);
	
	
	\filldraw [fill=black, draw=black,thick] (7.1,4.8) circle (2pt);
	\filldraw [fill=black, draw=black,thick] (7.5,5.3) circle (2pt);
	\filldraw [fill=black, draw=black,thick] (7.5,4.3) circle (2pt);
	\filldraw [fill=black, draw=black,thick] (7.9,4.8) circle (2pt);
    
	\filldraw [fill=black, draw=black,thick] (8.4,5.3) circle (2pt);
	\filldraw [fill=black, draw=black,thick] (8.4,4.3) circle (2pt);
	\filldraw [fill=black, draw=black,thick] (8.9,4.8) circle (2pt);
	\filldraw [fill=black, draw=black,thick] (9.4,5.3) circle (2pt);
	\filldraw [fill=black, draw=black,thick] (9.4,4.3) circle (2pt);
	
	\draw (7.1,4.8) -- (7.5,5.3);
	\draw (7.1,4.8) -- (7.5,4.3);
	\draw (7.9,4.8) -- (7.5,5.3);
	\draw (7.9,4.8) -- (7.5,4.3);
	\draw (7.9,4.8) -- (8.4,5.3);
	\draw (7.9,4.8) -- (8.4,4.3);
	\draw (8.9,4.8) -- (8.4,5.3);
	\draw (8.9,4.8) -- (8.4,4.3);
	\draw (9.4,5.3) -- (8.4,5.3);
	\draw (9.4,4.3) -- (8.4,4.3);
	\draw (8.9,4.8) -- (9.4,4.3);
	\draw (8.9,4.8) -- (9.4,5.3);
	

	\filldraw [fill=black, draw=black,thick] (8.1,2.5) circle (2pt);
	\filldraw [fill=black, draw=black,thick] (8.5,3) circle (2pt);
	\filldraw [fill=black, draw=black,thick] (8.5,2) circle (2pt);
	\filldraw [fill=black, draw=black,thick] (8.9,2.5) circle (2pt);

	\draw (8.1,2.5) -- (8.5,3);
	\draw (8.1,2.5) -- (8.5,2);
	\draw (8.9,2.5) -- (8.5,3);
	\draw (8.9,2.5) -- (8.5,2);

	\end{tikzpicture}
	
	\caption{Classes of graphs studied in the paper; examples showing proper inclusions.}
	\label{fig:classes}
\end{figure}

\section{Preliminaries} \label{sec:prelim}
All graphs considered in this paper are finite, simple, and undirected.
The {\em neighborhood} of a vertex $v\in V(G)$ is the set $N_G(v)=\{u\in V(G)\,:\,uv\in E(G)\}$, while {\em neighborhood of a set} $X\subseteq V(G)$ is defined as $N_G(X)= \bigcup_{v\in X}N_G(v)$. The {\em closed neighborhood} of a vertex $v\in V(G)$ is the set $N_G[v]=N(v) \cup \{v\}$, while {\em closed neighborhood of a set} $X\subseteq V(G)$ is defined as $N_G[X]= \bigcup_{v\in X}N_G[v]$. 
Given a set $X\subseteq V(G)$ and a vertex $u\in X$, we define $pn_G(u,X)$ as the set $\{w\in V(G)\,:\, N_G[w]\cap X = \{u\}\}$. A member of the set $pn_G(u,X)$ is said to be an {\em $X$-private neighbor of $u$ in $G$}.

Let $X \subseteq V(G)$ be any subset of vertices of $G$. The subgraph of $G$ induced by vertices of $X$ will be denoted by $\left\langle  X \right\rangle$.
A {\em clique} of a graph $G$ is a set $C \subseteq V(G)$ such that $\left\langle  C \right\rangle$ is a complete graph. An {\em independent set} of a graph $G$ is a set $I \subseteq V(G)$, no two vertices of which are adjacent.

A {\em dominating set} of a graph $G$ is a set $D \subseteq V(G)$ such that every vertex not in $D$ is adjacent to at least one vertex from $D$.
If $X$ and $Y$ are subsets of vertices in $G$, then $X$ {\em dominates} $Y$ in $G$ if $Y \subseteq N_G[X]$.

A set $S \subseteq V(G)$ is a {\em convex set}, if for any two vertices $u,v \in S$ the set $S$ contains all the vertices that lie on a shortest path between $u$ and $v$. Given a set $T\subseteq V(G)$, the {\em convex hull} of $T$, denoted $CH(T)$, is the smallest convex set that contains $T$. It is obvious that $S=CH(S)$ if and only if $S$ is a convex set. It is also easy to see that $T\subseteq S$ implies $CH(T)\subseteq CH(S)$. 

A set $S\subseteq V(G)$ is called {\em isometric}, if for any two vertices $u,v\in S$ there exists a shortest $u,v$-path whose all vertices are in $S$. By $d_G(u,v)$ we denote the distance between vertices $u$ and $v$, which is defined as the length of a shortest $u,v$-path in a graph $G$. The {\em diameter} ${\rm diam}(G)$ of a graph $G$ is defined as $\max_{u,v\in V(G)}\{d_G(u,v)\}$.  Using this notation, a subset $S$ is isometric if and only if $d_G(u,v)=d_H(u,v)$ for any two vertices $u,v\in S$, where $H$ is the subgraph of $G$ induced by vertices in $S$.

A {\em convex dominating set}, abbreviated a CD-set, of a graph $G$, is a set of vertices that is convex and dominating.
The {\em convex domination number} of $G$, denoted by $\gc(G)$, is the minimum cardinality of a CD-set of $G$. A CD-set of $G$ of cardinality $\gc(G)$ will be referred to as a $\gc$-set of $G$. A CD-set $D$ is {\em minimal CD-set} of a graph $G$, if no proper subset of $D$ is a CD-set. Similarly, an {\em isometric dominating set}, abbreviated an ID-set, of a graph $G$, is a set of vertices that is isometric and dominating. The {\em isometric domination number} of $G$, denoted by $\gi(G)$, is the minimum cardinality of an ID-set of $G$. An ID-set of $G$ of cardinality $\gi(G)$ will be referred to as a $\gi$-set of $G$.

A {\em split graph} is a graph whose vertex set can be partitioned into a clique and an independent set. Split graphs are contained in the class of chordal graphs, i.e.\ graphs with no induced cycles of length more than 3. A {\em simplicial vertex} is a vertex whose neighborhood is a clique. Intersection graphs of the intervals on the real line are called {\em interval graphs}, see~\cite{bls-99} for basic properties, and~\cite{acfgm-07,wg-1986} for applications. In our context, it is interesting to note that a graph is an interval graph if and only if it is chordal and asteroidal triple-free graph~\cite{LB1962}.

A pair $(x, y)$ of vertices of a graph $G$ is a {\em dominating pair} if, for every path $P$ between $x$ and $y$, the vertex set $V(P)$ is a dominating set of $G$. (It is worth mentioning that $x = y$ is allowed.)
A graph $G$ is a {\em weak dominating pair graph} if $G$ has a dominating pair.
A graph $G$ is a {\em chordal weak dominating pair graph} if $G$ is a chordal graph and weak dominating pair graph.
A graph $G$ is a {\em dominating pair graph} if every connected induced subgraph of $G$ has a dominating pair. A graph $G$ is a {\em chordal dominating pair graph} if $G$ is a chordal graph and dominating pair graph.

\section{Convex domination of chordal weak dominating pair graphs}
\label{sec:weakDom}
In this section we show that {\sc Convex Dominating Set Problem} on chordal weak dominating pair graphs is NP-complete.

\begin{center}
\fbox{\parbox{0.99\linewidth}{\noindent
{\sc Convex Dominating Set Problem}\\[.8ex]
\begin{tabular*}{.95\textwidth}{rl}
{\em Input:} & A connected graph $G = (V,E)$ and a positive integer $k$.  \\
{\em Question:} & Does $G$ have a convex dominating set of size $\leq k$? \\
\end{tabular*}
}}
\end{center}

\begin{lemma}
	\label{lem:split}
	Let $G = (V(G),E(G))$ be a connected split graph with a maximum clique $C$ and an independent set $I = V(G)\setminus C$. If $D$ is a minimal CD-set of $G$, then $D \subseteq C$.
\end{lemma}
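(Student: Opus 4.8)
The plan is to prove the contrapositive-flavoured statement that no vertex of the independent set $I$ survives in a minimal CD-set, i.e.\ that $D\cap I=\emptyset$. I would begin by recording two structural facts about a connected split graph $G=C\cup I$. Since $N_G(v)\subseteq C$ and $C$ is a clique, every $v\in I$ is a \emph{simplicial} vertex. Distances are also short: if $v\in I$ and $c\in C\setminus N_G(v)$, then $d_G(v,c)=2$, and \emph{every} neighbour $w\in N_G(v)$ yields a shortest $v,c$-path $v,w,c$ (because $w,c\in C$ are adjacent). The second fact I would isolate is that deleting a simplicial vertex from a convex set preserves convexity: a simplicial vertex can never be internal to a shortest path between two \emph{other} vertices, since its two consecutive neighbours on such a path would be adjacent and allow a shortcut, contradicting minimality of the path.

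The main argument proceeds by contradiction: assume $v\in D\cap I$. By the deletion fact, $D\setminus\{v\}$ is still convex, so by minimality of $D$ it fails to be dominating; hence $v$ has a private neighbour, that is, there is $w$ with $N_G[w]\cap D=\{v\}$, so $pn_G(v,D)\ne\emptyset$. The key step is to show this forces $D\cap C=\emptyset$. Since $w\in N_G[v]$, I split into $w=v$ and $w\in N_G(v)\subseteq C$. If $w=v$, then $N_G(v)\cap D=\emptyset$; taking any $c\in D\cap C$, the distance-$2$ observation shows all of $N_G(v)$ lies on shortest $v,c$-paths, so convexity of $D$ gives $N_G(v)\subseteq D$, contradicting $N_G(v)\cap D=\emptyset$ (note $N_G(v)\ne\emptyset$ as $G$ is connected). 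If $w\in C$, then $w\notin D$ (otherwise $w\in N_G[w]\cap D=\{v\}$ forces $w=v$, impossible); and since $w$ is adjacent to every other vertex of the clique $C$, any $c\in D\cap C$ would satisfy $c\in N_G[w]\cap D=\{v\}$, forcing $c=v$, again impossible. Either way $D\cap C=\emptyset$.

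It remains to rule out $D\subseteq I$. If $|D|\ge 2$, pick two distinct vertices of $D$; they lie in the independent set $I$, hence are nonadjacent, and any shortest path between them has an internal vertex lying in $C$, which convexity would force into $D\subseteq I$ — a contradiction. If $|D|=1$, say $D=\{v\}$, then domination requires $N_G[v]=V(G)$, so $v$ is adjacent to all of $C$; but then $C\cup\{v\}$ is a clique strictly larger than $C$, contradicting the maximality of $C$. This final contradiction establishes $D\subseteq C$.

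I expect the main obstacle to be the private-neighbour case analysis: one must treat both $w=v$ and $w\in C$ and, in the former case, invoke the precise claim that the \emph{entire} neighbourhood $N_G(v)$ lies on shortest $v,c$-paths, so that convexity yields $N_G(v)\subseteq D$. Once the short-distance structure of split graphs and the simplicial-deletion fact are in place, the remaining steps are routine.
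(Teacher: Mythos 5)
Your proof is correct, but it runs the argument in the opposite direction from the paper, and with one extra stage. The paper assumes $v\in D\cap I$ with $|D|\ge 2$ and uses convexity \emph{positively}: a shortest path from $v$ to any other vertex of $D$ begins with a neighbour of $v$, so convexity hands you some $w\in D\cap N(v)\subseteq D\cap C$; since $N_G[v]\subseteq N_G[w]$ and $v$ is simplicial, $v$ has no $D$-private neighbour and lies on no shortest path between vertices of $D\setminus\{v\}$, so $D\setminus\{v\}$ is already a CD-set, contradicting minimality. You instead start from minimality to extract a private neighbour of $v$, and then show by a case analysis on that private neighbour that its existence forces $D\cap C=\emptyset$; this leaves you with the additional task of refuting $D\subseteq I$ (which you do correctly, splitting into $|D|\ge 2$, handled by convexity through $C$, and $|D|=1$, handled by maximality of $C$ exactly as in the paper). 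The two proofs rest on the same structural facts --- simplicial vertices of $I$ cannot be internal to shortest paths, and a clique vertex closed-dominates everything its $I$-neighbours do --- so neither is more general; the paper's organisation is a little shorter because producing the witness $w\in D\cap C$ directly collapses the private-neighbour and the $D\subseteq I$ discussions into one step, whereas your contrapositive route needs both. One small point worth making explicit in your case $w=v$ is that $N_G(v)\neq\emptyset$ and $c\notin N_G(v)$ before invoking the distance-$2$ observation; you do note both, so the argument is complete.
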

\begin{proof}
Let $D$ be a minimal CD-set of a split graph $G$ with a maximum clique $C$ and an independent set $I = V(G)\setminus C$. 
	Suppose that $D \cap I \neq \emptyset$ and let $v \in D \cap I$. If $|D| = 1$, then $N[v] = V(G)$. This contradicts maximality of $C$, since $v \notin C$. Hence $|D| \geq 2$. Since $D$ is a convex set, there exists $w \in D \cap N(v)$. Note that $w \in C$ and that $N[v] \subseteq N[w]$, hence $v$ has no $D$-private neighbor. Vertex $v$ also does not lie on any shortest path between two vertices from $D-v$, which implies that $D-v$ is a CD-set, contradicting minimality of $D$.
\end{proof}

\begin{theorem}
	{\sc Convex Dominating Set Problem} is NP-complete for chordal weak dominating pair graphs.
\end{theorem}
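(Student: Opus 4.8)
The plan is to prove NP-hardness by a polynomial-time reduction from a known NP-complete problem, and to argue membership in NP separately (the latter is routine: a convex dominating set is a polynomial-size certificate, and verifying convexity and domination can be done by computing all-pairs shortest paths). Since the excerpt already recalls that convex domination is equivalent to \textsc{Set Cover} on split graphs via Raczek's result, and since Lemma~\ref{lem:split} tells us that on a split graph any minimal CD-set lives entirely inside the clique, the natural source problem is \textsc{Set Cover} (equivalently \textsc{Dominating Set} on split graphs). So I would start from an instance of \textsc{Set Cover} with universe $U=\{u_1,\dots,u_n\}$ and a family $\mathcal{S}=\{S_1,\dots,S_m\}$ of subsets, together with a bound $\ell$, and build the associated split graph $G_0$ in the usual way: a clique vertex $c_j$ for each set $S_j$, an independent vertex $x_i$ for each element $u_i$, and an edge $c_j x_i$ whenever $u_i\in S_j$.

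The obstacle is that $G_0$ is a split graph but need \emph{not} be a weak dominating pair graph, so I cannot reduce directly to it; I must augment $G_0$ into a graph $G$ that is simultaneously chordal, a weak dominating pair graph, and whose convex domination number still encodes the set-cover size. The core idea is to attach a gadget that \emph{forces} a dominating pair while not disturbing the correspondence between small convex dominating sets and small set covers. Concretely, I would add one or two new vertices adjacent to the entire clique $C=\{c_1,\dots,c_m\}$ (for instance a new clique vertex $a$ joined to all of $C$, keeping the graph split and hence chordal), chosen so that some fixed pair $(a,b)$ becomes a dominating pair: any path between the two endpoints of the gadget must traverse the clique and therefore dominate every $x_i$. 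The design must ensure two things: that the added vertices can always be included in a CD-set cheaply (ideally contributing a fixed additive constant to $\gc$), and that Lemma~\ref{lem:split} continues to apply so that the ``interesting'' part of any minimal CD-set sits inside the clique and thus selects a family of sets $S_j$.

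With the gadget in place, the heart of the argument is the equivalence: $G$ has a convex dominating set of size at most $k$ (for an appropriately shifted bound $k=\ell+\text{const}$) if and only if $\mathcal{S}$ has a subcover of size at most $\ell$. For the forward direction I would take a minimal CD-set $D$, invoke Lemma~\ref{lem:split} to conclude $D\subseteq C$ up to the gadget vertices, and read off that $\{S_j : c_j\in D\}$ covers $U$ because $D$ dominates every independent vertex $x_i$. For the converse, given a subcover $\{S_j : j\in J\}$ I would set $D=\{c_j: j\in J\}$ together with the gadget vertices and verify directly that this set is convex (inside a clique, shortest paths between clique vertices stay in the clique, and the gadget was built to add no detours) and dominating.

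The step I expect to be the main obstacle is the gadget design: it must simultaneously preserve chordality, install a genuine dominating pair, and avoid creating shortcuts or alternative shortest paths that would let a minimal CD-set omit clique vertices (which would break the reduction) or force inclusion of independent vertices (which Lemma~\ref{lem:split} rules out only for split graphs, so chordality of the augmented graph must be checked carefully). Getting the additive constant exactly right, and confirming that convexity of the candidate set holds in the augmented graph rather than merely in $G_0$, is where the technical care lies; everything else is the standard \textsc{Set Cover} bookkeeping.
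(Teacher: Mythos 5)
Your overall architecture matches the paper's: reduce from convex domination on split graphs (equivalently \textsc{Set Cover}), keep Lemma~\ref{lem:split} in play, and attach a gadget that installs a dominating pair. But the concrete gadget you propose does not work, and the point on which it fails is exactly the point the proof turns on. If the new vertices $a,b$ are adjacent only to the clique $C$, then $(a,b)$ is \emph{not} a dominating pair: the path $a,c_j,b$ passes through a single clique vertex $c_j$ and so dominates only those independent vertices $x_i$ with $u_i\in S_j$, not all of $I$. Your justification (``any path between the two endpoints must traverse the clique and therefore dominate every $x_i$'') conflates traversing the clique with covering the whole independent set. Since a connecting path need only touch one clique vertex, the only way to make \emph{every} connecting path dominating is to make one endpoint dominate the independent set by itself; this is why the paper's construction joins a new vertex $x$ to all of $V(G')$, not just to $C$.

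Once you do that, a second problem appears which your sketch also does not address: such a near-universal vertex $x$ together with any $c\in C$ forms a convex dominating set of size $2$ of the augmented graph (it is a clique, $x$ dominates $V(G')$, and $c$ dominates $y$), so $\gc$ would collapse to a constant and the reduction would break. The paper blocks this with a third vertex $y'$ pendant on $y$, where $y$ is adjacent only to $C$: the vertex $y'$ forces $y$ into every minimal CD-set, the non-adjacency of $y$ to $x$ and to $I$ then forces the rest of the set into $C$ (via convexity, minimality, and the argument of Lemma~\ref{lem:split}), and the count comes out as $\gc(G)=\gc(G')+1$. You correctly identified the gadget as the crux, but the instantiation you offer does not produce a dominating pair at all, and the repair --- a vertex adjacent to the entire split graph plus a pendant to prevent trivialization --- is the substantive content of the proof rather than a detail to be checked.
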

\begin{proof}
It is easy to see that {\sc Convex Dominating Set Problem} is in NP. Indeed, one can check in linear time that a given set $D$ of $k$ vertices from a graph $G$ is dominating;  using shortest path algorithms one can also check in polynomial time, whether  $D$ is convex.
To prove that {\sc Convex Dominating Set Problem} is NP-complete for chordal weak dominating pair graphs we use a polynomial reduction from {\sc Convex Dominating Set Problem} for split graphs, which is known to be NP-complete~\cite[Theorem 3]{R2004}.
	 
	 Let $G' = (V(G'),E(G'))$ be an arbitrary connected split graph with a maximum clique $C$ and an independent set $I = V(G')\setminus C$.
Let $G = (V(G), E(G))$ be the graph defined as follows:
	 $$V(G) = V(G') \cup \{x,y,y'\} \text{ and}$$
	 $$E(G) = E(G') \cup \{(x, g')\, |\, g' \in V(G') \}  \cup \{(y, c)\, |\, c \in C \}  \cup \{(y, y')\}.$$
	 First we show that $G$ is a chordal weak dominating pair graph with dominating pair $(x,y)$. It is easy to see that $G$ is a chordal graph. Indeed, as $G'$ is a split graph, the graph obtained from $G'$ by adding vertex $x$ is still a split graph, and, when $y$ is added next, $y$ is a simplicial vertex, and therefore the resulting graph remains chordal. Finally, by adding $y'$, which is also a simplicial vertex, we get that $G$ is a chordal graph. The set $\{x,y\}$ already dominates the whole graph, therefore every $x,y$-path dominates $G$. It follows that $G$ is a chordal weak dominating pair graph with $(x,y)$ as dominating pair. It is also clear that $G$ can be constructed from $G'$ in polynomial time.
	 
\begin{claim} If $k$ is an integer, $k\ge 1$, then $G'$ has a convex dominating set of size at most $k$, if and only if $G$ has a convex dominating set of size at most $k+1$.
\end{claim}
\begin{claimproof} Let $D'$ be a minimal CD-set of $G'$ with $|D'| = k$. By Lemma \ref{lem:split}, $D' \subseteq C$. We claim that $D = D' \cup \{y\}$ is a CD-set of $G$. Since $D'\subseteq C \subseteq N(y)$, $D$ is a clique, and $D$ is a convex set in $G$. Vertices $x$ and $y$ are dominated by $D'$, while vertex $y'$ is dominated by $y$. Hence $D$ is a CD-set of $G$ with cardinality $k+1$.
	 
	 Let $D$ be a minimal CD-set of $G$ with $|D| = k+1$. Firstly, we show that $y' \notin D$. Suppose that $y' \in D$. Since $D$ is a dominating set, at least one vertex of $N[x]$ has to be in $D$. Vertex $y$ lies on all shortest paths between $y'$ and vertices in $N[x]$, therefore $y \in D$. Hence $D\setminus\{y'\}$ is a CD-set, contradicting minimality of $D$. This also implies that $y \in D$.
	 
	 Next, we  show that $x \notin D$. Suppose that $x \in D$. Since $D$ is convex and we already know that $y \in D$, all vertices of $C$ have to be in $D$. We infer that $C \cup \{y\}\subsetneq D$, and $C\cup\{y\}$ is a CD-set of $G$, which again contradicts the minimality of $D$.
	 
	 Finally, we show that $D \cap I = \emptyset$. Suppose that $v \in D \cap I$. Again, since $D$ is convex and $y \in D$, there exists $w \in D \cap N(v)$. Now, we use the same arguments as in the proof of Lemma \ref{lem:split} to show that $D \cap I = \emptyset$. Hence, $D \subseteq C \cup \{y\}$.
	 
	  We claim that $D' = D\setminus\{y\}$ is a CD-set of $G'$. Since $D'$ is a clique, it is a convex set in $G'$, and because $pn_G(y,D) = \{y'\}$, we infer that $D'$ is also a dominating set in $G'$. Finally, this implies that $D'$ is a CD-set of $G'$ with cardinality $k$.
\end{claimproof}

By the above claim, the existence of a polynomial time algorithm for determining whether $\gc(G)\le k+1$, where $G$ is an arbitrary chordal  weak dominating pair graph, implies the existence of a polynomial time algorithm for determining whether $\gc(G)\le k$, where $G$ is an arbitrary split graph. By the NP-completeness of the latter problem, we derive that {\sc Convex Dominating Set Problem} is NP-complete for chordal weak dominating pair graphs.
\end{proof}

\section{Convex domination of chordal dominating pair graphs}\label{sec:pairDom}

In this section we will prove that a convex dominating set of a chordal dominating pair graphs can be found in polynomial time.  We will be using the following characterization of chordal dominating pair graphs.

\begin{theorem}{\cite[Theorem 5.3]{DK2002}}
\label{thm:forbidden}
	A chordal graph $G$ is a dominating pair graph if and only if it
	does not contain the graphs $A_1$ and $B_n$ $(n \geq 1)$ as an induced subgraph (see Figure~\ref{fig:Forbidden}).
\end{theorem}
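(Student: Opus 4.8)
The plan is to prove both implications by reducing each to a statement about a single connected graph, exploiting that the property of being a dominating pair graph and the property of having no induced $A_1$ or $B_n$ are both hereditary under taking connected induced subgraphs. A connected induced subgraph of a graph without induced $A_1$ or $B_n$ again has no induced $A_1$ or $B_n$; and by definition $G$ is a dominating pair graph precisely when every connected induced subgraph of $G$ is a weak dominating pair graph. Hence it suffices to establish two single-graph facts: (i) none of $A_1$ and $B_n$ $(n\ge 1)$ is a weak dominating pair graph, and (ii) every connected chordal graph containing no induced $A_1$ or $B_n$ is a weak dominating pair graph. Lifting (ii) along the hereditary structure then yields that every connected induced subgraph of such a $G$ has a dominating pair, i.e.\ $G$ is a dominating pair graph.

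For the ``only if'' direction I would verify fact (i) by direct inspection of the graphs in Figure~\ref{fig:Forbidden}. For each candidate pair $(u,v)$ of vertices in $A_1$ (respectively $B_n$) one exhibits a \emph{single} $u,v$-path whose vertex set leaves some vertex $w$ undominated; since the dominating pair property requires \emph{every} $u,v$-path to dominate, a single failing path suffices to reject the pair. By the symmetries of these graphs only a few representative choices of $(u,v)$ need to be treated. This shows that any chordal graph containing $A_1$ or $B_n$ as an induced subgraph contains a connected induced subgraph with no dominating pair, and therefore is not a dominating pair graph.

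For the converse I would argue fact (ii) by contraposition: assuming $G$ is a connected chordal graph that is \emph{not} a weak dominating pair graph, I would construct an induced copy of $A_1$ or some $B_n$. The strategy is to select a well-chosen pair $(x,y)$ of vertices — for example the endpoints of a path realizing an extremal quantity such as the diameter, or a pair produced by a double sweep of a breadth-first search in the spirit of the AT-free arguments of Corneil, Olariu and Stewart — and then to examine a vertex $w$ witnessing the failure, namely a vertex that together with its closed neighborhood is avoided by some $x,y$-path. Using chordality to forbid induced cycles of length at least four, together with the extremality of the chosen path and the local structure around $w$, one forces the obstruction into one of the listed shapes: a bounded configuration gives $A_1$, while an obstruction spread along a path whose length is governed by the relevant distances gives $B_n$, with the index $n$ recording exactly that length.

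The main obstacle is step (ii): converting the mere nonexistence of a dominating pair into one of the \emph{explicit} forbidden subgraphs. This hinges on choosing the pair $(x,y)$ extremally enough that a failing path together with the missed vertex $w$ cannot be locally repaired, and then on a case analysis that leverages chordality to eliminate long induced cycles and to pin the minimal obstruction down to precisely $A_1$ or some $B_n$. Controlling the parameter $n$ and proving that no configuration outside this family can occur is the delicate part of the argument.
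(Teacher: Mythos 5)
This statement is quoted from the literature (it is Theorem~5.3 of Deogun and Kratsch~\cite{DK2002}); the paper gives no proof of it, so there is nothing internal to compare your attempt against. Judged on its own terms, your write-up is a plan rather than a proof. The reductions to single-graph facts via heredity are fine, and fact (i) is genuinely routine: $A_1$ is a tree (the spider with three legs of length two), so each pair of vertices has a unique path and the path between any two leaf tips misses the third leg, and a similar uniform check works for $B_n$ --- though note that since $B_n$ is an infinite family you need an argument parametrized by $n$, not a finite inspection.

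The genuine gap is fact (ii), which is the entire content of the theorem. You describe a strategy --- pick $(x,y)$ extremally, look at a vertex $w$ missed by some $x,y$-path, use chordality to rule out long induced cycles, and ``force the obstruction into one of the listed shapes'' --- but none of this is carried out. You do not specify which extremal pair to use, do not show that a failing path plus the witness $w$ actually yields an induced subgraph rather than a subgraph with extra adjacencies, do not explain how the parameter $n$ of $B_n$ is read off from the configuration, and do not exclude obstructions outside the family $\{A_1\}\cup\{B_n : n\ge 1\}$. You acknowledge this yourself in the final paragraph. Since this case analysis is precisely where the theorem lives (it occupies the bulk of the argument in \cite{DK2002}), the proposal as written does not establish the forward implication and cannot be accepted as a proof.
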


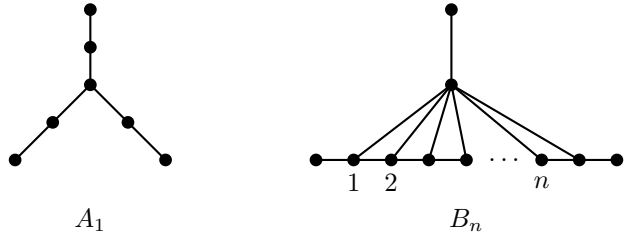
\begin{figure}[ht!]
	\begin{center}
		\begin{tikzpicture}[scale=1.0,style=thick,x=1cm,y=1cm]
		\def\vr{2.5pt} 
		
		
		\filldraw [fill=black, draw=black,thick] (0,0) circle (2pt);
		\filldraw [fill=black, draw=black,thick] (0.5,0.5) circle (2pt);
		\filldraw [fill=black, draw=black,thick] (1.5,0.5) circle (2pt);
		\filldraw [fill=black, draw=black,thick] (2,0) circle (2pt);

		\foreach \y  in {1,1.5,2}
		{
			\filldraw [fill=black, draw=black,thick] (1,\y) circle (2pt);
		}

		\foreach \x  in {4,4.5,5,5.5,6,7,7.5,8}
		{
			\filldraw [fill=black, draw=black,thick] (\x,0) circle (2pt);
		}
		\filldraw [fill=black, draw=black,thick] (5.8,1) circle (2pt);
		\filldraw [fill=black, draw=black,thick] (5.8,2) circle (2pt);
		
		\draw (4,0) -- (6,0);
		\draw (7,0) -- (8,0);
		\draw (5.8,1) -- (5.8,2);
		
		\foreach \x  in {4.5,5,5.5,6,7,7.5}
		{
			\draw (\x,0) -- (5.8,1);
		}
		
		\draw (0,0) -- (0.5,0.5);
		\draw (0.5,0.5) -- (1,1);
		\draw (1,1) -- (1.5,0.5);
		\draw (1.5,0.5) -- (2,0);
		\draw (1,1) -- (1,1.5);
		\draw (1,1.5) -- (1,2);
		
		\draw (1,-0.8) node {{\bf $A_1$}}; 
		\draw (6,-0.8) node {{\bf $B_n$}}; 
		\draw (4.5,-0.3) node {$1$}; 
		\draw (5,-0.3) node {$2$}; 
		\draw (7,-0.3) node {$n$}; 
		\draw (6.5,0) node {$\ldots$}; 
		
		\end{tikzpicture}
	\end{center}
	\caption{Forbidden graphs in Theorem~\ref{thm:forbidden}.}
	\label{fig:Forbidden}
\end{figure}

We follow with two easy lemmas about chordal dominating pair graphs with a given dominating pair (the first one follows directly from the fact that a convex dominating set is a dominating set).

\begin{lemma}\label{l:neighbours}
	Let $G$ be a chordal dominating pair graph and let $x,y$ be an arbitrary dominating pair in $G$. Then any minimum convex dominating set of $G$ contains at least one vertex from $N[x]$ and at least one vertex from $N[y]$.
\end{lemma}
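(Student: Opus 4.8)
The plan is to observe that the asserted property holds for any dominating set of $G$, not merely for minimum convex dominating sets; accordingly the only ingredient I would use is the remark preceding the lemma, namely that a convex dominating set is in particular a dominating set.

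First I would fix a minimum convex dominating set $D$ of $G$. Being convex and dominating, $D$ is a dominating set, so $N_G[D] = V(G)$, and in particular $x \in N_G[D]$. This means that either $x \in D$ or $x$ has a neighbor lying in $D$. In the first case $x$ itself witnesses that $D \cap N_G[x] \neq \emptyset$, since $x \in N_G[x]$; in the second case a neighbor $w \in D$ of $x$ satisfies $w \in N_G(x) \subseteq N_G[x]$, so again $D \cap N_G[x] \neq \emptyset$. Hence $D$ contains at least one vertex of $N_G[x]$. I would then repeat this argument verbatim with $y$ in place of $x$, using $y \in N_G[D]$, to obtain $D \cap N_G[y] \neq \emptyset$, which together with the previous conclusion proves the lemma.

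There is no real obstacle in this argument: it is a direct unwinding of the definitions of dominating set and closed neighborhood, and it uses neither chordality, nor the dominating-pair hypothesis, nor convexity, nor the minimality of $D$. In fact the statement holds for every dominating set of $G$; the stronger hypotheses are retained only because the lemma is subsequently applied to minimum convex dominating sets. The single point worth flagging is purely formal, namely that ``$x$ is dominated by $D$'' must be read as $x \in N_G[D]$, allowing the case $x \in D$ --- which is precisely why the lemma speaks of the closed neighborhood $N[x]$ rather than the open neighborhood $N(x)$.
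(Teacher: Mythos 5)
Your argument is correct and is exactly the one the paper intends: the text preceding the lemma notes that it ``follows directly from the fact that a convex dominating set is a dominating set,'' and your proof is simply the careful unwinding of that observation ($x \in N_G[D]$ iff $D \cap N_G[x] \neq \emptyset$, and likewise for $y$). Your remark that neither chordality, the dominating-pair hypothesis, convexity, nor minimality is actually used is also accurate.
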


\begin{lemma}\label{l:hullxy}
	Let $G$ be a chordal dominating pair graph and let $x,y$ be an arbitrary dominating pair in $G$. Then $\gc(G) \leq |CH(\{x,y\})|$.
\end{lemma}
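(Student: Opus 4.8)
The plan is to show that $CH(\{x,y\})$ is itself a convex dominating set; since $\gc(G)$ is by definition the minimum cardinality over all CD-sets of $G$, this immediately yields $\gc(G)\le |CH(\{x,y\})|$. Convexity comes for free: $CH(\{x,y\})$ is, by definition, the smallest convex set containing $x$ and $y$, hence it is convex. So the only property that actually requires an argument is that $CH(\{x,y\})$ dominates $G$.

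To establish domination, I would first observe that $CH(\{x,y\})$ contains at least one (in fact every) shortest $x,y$-path. Indeed, since $CH(\{x,y\})$ is convex and contains both $x$ and $y$, the definition of a convex set forces it to contain all vertices lying on every shortest $x,y$-path. Fixing one such shortest path $P$, we therefore have $V(P)\subseteq CH(\{x,y\})$.

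The key step is then to invoke the dominating pair hypothesis. Because $P$ is a path between $x$ and $y$ and $(x,y)$ is a dominating pair, the vertex set $V(P)$ is a dominating set of $G$. As $V(P)\subseteq CH(\{x,y\})$ and any superset of a dominating set is again dominating, $CH(\{x,y\})$ is a dominating set as well. Combined with its convexity, this makes $CH(\{x,y\})$ a CD-set, which completes the argument. There is essentially no hard obstacle here: the statement follows directly from the two defining properties in play, and the only point to state cleanly is that $CH(\{x,y\})$ genuinely contains a shortest $x,y$-path, so that the dominating pair property can be applied to a path that actually lies inside the hull. I would also note that neither chordality nor the full dominating pair graph structure is used; the argument relies only on $(x,y)$ being a dominating pair.
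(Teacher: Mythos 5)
Your proof is correct and follows essentially the same route as the paper: show that $CH(\{x,y\})$ is convex by definition, observe that it therefore contains a shortest $x,y$-path $P$, and conclude from the dominating pair property that $V(P)$, and hence its superset $CH(\{x,y\})$, dominates $G$. Your added remark that chordality is not needed is accurate but does not change the argument.
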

\begin{proof}
	It suffices to prove that $S=CH(\{x,y\})$ is a convex dominating set in $G$. Since $S$ is a convex hull of $\{x,y\}$, it is clearly convex, thus $S$ contains at least one $x,y$-path $P$. As $(x,y)$ is a dominating pair, $P \subseteq S$ is a dominating set. 
\end{proof}

\begin{remark}
\label{rem:uni}
	Let $G$ be an arbitrary graph.  A graph $G$ has a universal vertex if and only if $\gc(G)=1$.
\end{remark}

\begin{remark}
	\label{rem:cg4}
	Let $G$ be an arbitrary graph. If $\gc(G) \leq 4$, then there exists a set $R\subseteq V(G)$ with $|R|\le 4$ such that $CH(R)$ is a CD-set of size $\gc(G)$.
\end{remark}

The following (our main) result shows that a smallest convex dominating set of a chordal dominating pair graph $G$ is realizable as the convex hull of some set on at most four vertices in $G$. Since there are polynomially many such sets in any graph, and the convex hull of any set of vertices in a graph can also be computed in polynomial time, we derive that convex domination number of a chordal dominating pair graph can be computed in polynomial time.

\begin{theorem}
	\label{thm:main}
Let $G$ be a chordal dominating pair graph. Then there exists a set $R\subseteq V(G)$ with $|R|\le 4$ such that $CH(R)$ is a CD-set of size $\gc(G)$.
\end{theorem}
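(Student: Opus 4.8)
The plan is to split on the value of $\gc(G)$. When $\gc(G)\le 4$ the statement is precisely Remark~\ref{rem:cg4} (indeed one may even take $R=D$ for a $\gc$-set $D$, since $|D|\le 4$ and $CH(D)=D$), so nothing is left to do. Assume therefore $\gc(G)\ge 5$ and fix a $\gc$-set $D$; being of minimum size, $D$ is in particular a minimal CD-set. Since $D$ is convex and $G$ is connected, $\langle D\rangle$ is connected, and being an induced subgraph of a chordal dominating pair graph it is itself a chordal dominating pair graph (both chordality and the dominating-pair property pass to connected induced subgraphs). It suffices to produce a set $R$ with $|R|\le 4$ and $CH(R)=D$, for then $CH(R)=D$ is automatically a CD-set of size $\gc(G)$. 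The candidate for $R$ is the set $\mathrm{ext}(D)$ of simplicial vertices of $\langle D\rangle$: these are exactly the extreme vertices of the convex set $D$, since deleting a simplicial vertex of $\langle D\rangle$ leaves a convex set, and in a chordal graph geodesic convexity is a convex geometry whose extreme points are the simplicial vertices, so $D=CH(\mathrm{ext}(D))$. (If one prefers a self-contained argument in place of citing the convex-geometry fact, one checks directly that every non-simplicial vertex of $\langle D\rangle$ lies on a shortest path between two other vertices of $D$, hence already belongs to the hull of the remaining vertices.) The entire problem is thereby reduced to bounding $|\mathrm{ext}(D)|$ by $4$.

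First I would extract what minimality gives. If $v$ is simplicial in $\langle D\rangle$, then $D\setminus\{v\}$ is convex, so by minimality it cannot be dominating; as $\langle D\rangle$ is connected with $|D|\ge 2$, the vertex $v$ is not isolated in $\langle D\rangle$, so the only way $D\setminus\{v\}$ can fail to dominate is that $v$ has a private neighbour $p_v\in V(G)\setminus D$ with $N_G[p_v]\cap D=\{v\}$. Thus every extreme vertex of $D$ carries a private neighbour outside $D$; distinct extreme vertices $v_i\ne v_j$ carry distinct private neighbours, and $p_i$ is non-adjacent to every $v_j$ with $j\ne i$. This is exactly the pendant-leg data one wants to feed into the forbidden-subgraph characterisation.

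The crux is to prove that $\langle D\rangle$ has at most four simplicial vertices. I would argue by contradiction: suppose $v_1,\dots,v_5$ are distinct simplicial vertices of $\langle D\rangle$ with private neighbours $p_1,\dots,p_5$. The plan is to invoke Theorem~\ref{thm:forbidden} and locate an induced $A_1$ or $B_n$ in $G$. The tool is the dominating pair of the connected induced subgraph $\langle D\cup\{p_1,\dots,p_5\}\rangle$, which exists because $G$ is a dominating pair graph: since each $p_i$ can only be dominated along a path passing through $v_i$, any fixed dominating $x,y$-path simultaneously constrains how the $v_i$ are arranged, while each length-two leg $p_i$–$v_i$–(its neighbour in $D$) is a candidate leg of $A_1$ or end of $B_n$. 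I expect the analysis to bifurcate into the case where three of the legs emanate independently from a common branch vertex (yielding an induced $A_1$) and the case where the $v_i$ are strung along a dominated spine (yielding an induced $B_n$), both contradicting that $G$ is a dominating pair graph. This forbidden-subgraph extraction is the main obstacle, and it is where the precise geometry of $A_1$ and $B_n$, together with chordality, must be used; once $|\mathrm{ext}(D)|\le 4$ is established, $R=\mathrm{ext}(D)$ finishes the proof.

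The delicate point in the contradiction step is the treatment of degenerate adjacencies: two private neighbours $p_i,p_j$ may be mutually adjacent, or adjacent to a common vertex of $G\setminus D$, and such coincidences can collapse two putative legs into one and thereby destroy an intended induced $A_1$. Carefully ruling out, or otherwise absorbing, these adjacencies is precisely what pins the bound at four rather than a smaller constant, and it is the portion of the argument I would expect to require the most case analysis.
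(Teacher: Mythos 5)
There are two genuine gaps, and the first is a false step rather than a missing detail. Your reduction to $R=\mathrm{ext}(D)$ rests on the claim that in a chordal graph geodesic convexity is a convex geometry, so that every convex set is the hull of its extreme (equivalently, simplicial-in-$\langle D\rangle$) vertices. That Krein--Milman-type property holds for \emph{monophonic} (induced-path) convexity on chordal graphs, and for \emph{geodesic} convexity only on Ptolemaic graphs (Farber--Jamison); the convexity in this paper is geodesic. The gem ($P_4=w_1w_2w_3w_4$ plus a universal vertex $v$) is chordal and, having only five vertices, contains neither $A_1$ nor any $B_n$, so by Theorem~\ref{thm:forbidden} it is a chordal dominating pair graph; its vertex set is convex, its only extreme points are $w_1$ and $w_4$, and $CH(\{w_1,w_4\})=\{w_1,v,w_4\}$ is a proper subset of $V$. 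Your ``self-contained'' substitute does not repair this: showing that every non-extreme vertex lies on a geodesic between two \emph{other} vertices of $D$ is true in the gem as well, yet the hull of the extreme points still fails to recover the set --- one cannot iterate the removals. Even granting the paper's conclusion that $D=CH(R)$ for some $R$ with $|R|\le 4$, this only yields $\mathrm{ext}(D)\subseteq R$, not $CH(\mathrm{ext}(D))=D$, so your specific candidate can fail even when the theorem is true. If you want to keep this route you must prove $D=CH(\mathrm{ext}(D))$ for minimum CD-sets of chordal dominating pair graphs specifically, and no argument for that is offered.

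The second gap is that the step carrying essentially all of the combinatorial weight --- $|\mathrm{ext}(D)|\le 4$ --- is only announced: the forbidden-subgraph extraction, and in particular the ``degenerate adjacencies'' among private neighbours that you flag at the end, are exactly where the work lies, and they are not done. For comparison, the paper's proof is organised quite differently: it fixes a dominating pair $(x,y)$, splits on $d(x,y)\in\{2\}$ versus $d(x,y)\ge 3$ and on how a CD-set $S$ with $\{x,y\}\not\subseteq S$ meets $N[x]$ and $N[y]$, and builds $R$ from one neighbour of $x$, one neighbour of $y$, and at most two further vertices of $S$, each of which single-handedly dominates the part of $N(x)$ (resp.\ $N(y)$) left uncovered by $CH$ of the first two; the existence of such single dominating vertices is what chordality and the subgraphs $B_1,B_2$ are used for, via the ``linearly ordered neighbourhoods'' claims. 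Crucially, that argument only needs $CH(R)\subseteq S$ with $CH(R)$ dominating --- not $CH(R)=S$ --- which is precisely how it sidesteps the Krein--Milman issue your approach runs into.
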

\begin{proof}
Let $G$ be a chordal dominating pair graph, and let $(x,y)$ be its dominating pair. 
In the proof we will consider different cases with respect to the distance between $x$ and $y$, and the appearance of $x$ and $y$ in a convex dominating set $S$ of $G$, and in each of the cases we will establish the existence of a set $R$ with $|R|\le 4$ such that $CH(R)\subseteq S$, and $CH(R)$ is a CD-set of $G$. By the reasoning preceding the theorem, this implies that one can find a minimum convex dominating set in a chordal dominating pair graphs in polynomial time. 

If $G$ has a universal vertex or $\gc(G) \leq 4$, then by Remark~\ref{rem:uni} and \ref{rem:cg4} the assertion of the theorem is clear. Hence, we assume in the rest of the proof that $G$ is a chordal dominating pair graph with $\Delta(G) \leq |V(G)|-2$ and $\gc(G) > 4$. Let $(x,y)$ be an arbitrary dominating pair in $G$. In addition, we may  assume that $x$ and $y$ are not adjacent, because if $xy\in E(G)$, then $\{x,y\}$ is a CD-set, which is clearly minimum, and $\gc(G)=2$. (Note that if $R$ is a single vertex or two adjacent vertices, then its convex hull coincides with $R$.)

In the proof of Lemma~\ref{l:hullxy} we established that $CH(\{x,y\})$ is a convex dominating set of $G$. Hence, if $CH(\{x,y\})$ is a CD-set of size $\gc(G)$, then $R=\{x,y\}$. Note that if $S$ is a set of vertices such that $\{x,y\}\subseteq S$, then $CH(\{x,y\})\subseteq CH(S)$. Hence, if $CH(\{x,y\})$ is not a minimum CD-set (of size $\gc(G)$), then a minimum CD-set does not contain both $x$ and $y$. 
From this reason we may restrict our attention to CD-sets $S$ with $\{x,y\}\not\subseteq S$.

\smallskip

Recall that $d(x,y)>1$, and let us first assume that $d(x,y) \geq 3.$ 

\noindent {\bf Case 1.} $d(x,y) \geq 3.$

\smallskip

Let $S$ be a CD-set of $G$ such that $\{x,y\}\not\subseteq S$.  Lemma~\ref{l:neighbours} implies that $S$ contains at least one vertex from $N[x]$ and at least one vertex from $N[y]$. We distinguish three cases.

\begin{enumerate}
	\item For $u \in N(x)$, $S$ is a convex dominating set containing $\{u,y\}$ (and not containing $x$). 
	\item For $z \in N(y)$, $S$ is a convex dominating set containing $\{x,z\}$ (and not containing $y$).
	\item For $u \in N(x), z \in N(y)$, $S$ is a convex dominating set containing $\{u,z\}$ (and not containing $x$ nor $y$). 
\end{enumerate}

To conclude the proof of the theorem for graphs where $d(x,y) \geq 3$ it suffices to prove that in each of the above cases there exists a set of vertices $R$ with $|R|\le 4$ such that $CH(R)\subseteq S$ and $CH(R)$ is a CD-set of $G$.

\smallskip

\noindent {\bf Case 1.1} $S$ is a CD-set containing $\{u,y\}$, where $u \in N(x)$, and $x\notin S$.

\smallskip

Let $D=CH(\{u,y\})$. Clearly, $D\subseteq S$. Note that $d(u,y) \geq 2$, since $d(x,y)\geq 3$.

\begin{claim}\label{l:AnotDomin}
	Vertices in $V(G)\setminus N(x)$ are dominated by $D$.
\end{claim}
\begin{claimproof}
	Let $P$ be any shortest $u,y$-path of $G$. Since $D$ is a convex set containing $\{u,y\}$, $P$ is contained in $D$. As $(x,y)$ is dominating pair, $\{x\}\cup V(P)$ consists of the vertices of an $x,y$-path and is thus a dominating set of $G$. Therefore $V(P)$ dominates all vertices of $G$ except perhaps some vertices from $N(x)$. 
\end{claimproof}

Let $A$ be the set of vertices in $N(x)$ that are not dominated by $D$ and let $B\subseteq N(x)$ be the set of vertices not in $D$ but dominated by $D$, i.e.,\ $B=N(x)\cap (N[D]-D)$. Furthermore let $L=V(G)-N[x]-N[y]-D$.

If $A=\emptyset$, then $D=CH(\{u,y\})$ itself is a convex dominating set $S$ containing $\{u,y\}$, and we may take $R$ to be $\{u,y\}$ (where following the above notation $R$ is a set with at most 4 vertices such that $CH(R)\subseteq S$).

 Suppose now that $A$ is not empty. We will establish some properties of sets $A$ and $B$.

\begin{claim}\label{l:Aisolated}
	Let $a$ be an arbitrary vertex from $A$. If $l \in L \cup D \cup N[y]$, then $al \notin E(G)$.
\end{claim}
\begin{claimproof}
	As $A$ is a set of vertices not dominated by $D$, $a$ has no neighbors in $D$. Since $d(x,y) \geq 3$, $ay \notin E(G)$. Finally, let $l \in L \cup N(y)$.
	Suppose that $al\in E(G)$. Let $d$ be a neighbor of $l$ in $D$, and let $P$ be a shortest $d,u$-path in $G$ (note that $d=u$ is also possible). Since $D$ is convex, $P \subseteq D$. Let $u'$ be the last neighbor of $l$ on $P$ and let $Q$ be the $u',u$-subpath of $P$. Since $au\notin E(G)$ and $xl\notin E(G)$, we derive that $x,a,l,Q,x$ is an induced cycle of length at least 4, a contradiction with $G$ being chordal. 
\end{claimproof}

\begin{claim}\label{l:aNotInD}
	If $a\in A$, then $a\notin S$. 
\end{claim} 
\begin{claimproof}
	Since $a$ is not dominated by $D$, $a$ is not adjacent to $u$. Therefore any convex set that contains $D \cup \{a\}$, also contains $x$, a contradiction with $x\notin S$.
\end{claimproof}

Since $A \neq \emptyset$ and $S$ is a dominating set, we have $N[A]\cap S\neq\emptyset$. From Claims~\ref{l:Aisolated} and~\ref{l:aNotInD} (and since $x\notin S$), we derive that $N[A]\cap S\subseteq B$. Let $B_1=N[A]\cap S$. (Clearly, $B_1\subseteq B$.) 

\begin{claim}\label{l:Bclique}
The set $B_1 \cup (D\cap N(x))$ is a clique.
\end{claim}
\begin{claimproof}
	Suppose that $b_1$ and $b_2$ are two nonadjacent vertices from $B_1 \cup (D\cap N(x))$. Then $b_1,x,b_2$ is a shortest $b_1,b_2$-path which implies that $x \in CH(D \cup B_1)(\subseteq S$), a contradiction.
\end{claimproof}

\begin{claim}\label{l:linearOreder}
The sets from the family $\{N_{<A>}(b_i)\,:\, b_i \in B_1\}$ are linearly ordered with respect to inclusion.
\end{claim}
\begin{claimproof}
	Suppose that there exist $b_1,b_2 \in B_1$ such that $N_{<A>}(b_1) \nsubseteq N_{<A>}(b_2)$ and $N_{<A>}(b_2) \nsubseteq N_{<A>}(b_1)$. Therefore there exist $a_1,a_2\in A$ such that $a_1b_1,$ $a_2b_2 \in E(G)$, and $a_2b_1,a_1b_2 \notin E(G)$. Note that $a_1a_2 \notin E(G)$, as $G$ is chordal. It follows from Claim~\ref{l:Bclique} that $b_1u, b_2u,b_1b_2 \in E(G)$. Furthermore, Claim~\ref{l:Aisolated} implies that $a_1u,a_2u\notin E(G)$. Let $P=u,u_1,\ldots , u_k,y$ be a shortest $u,y$-path in $G$. As $D$ is convex, $P \subseteq D$. Note that $u_2$ is not adjacent to neither of $b_1,b_2$. Indeed, if $u_2b_1 \in E(G)$ ($u_2b_2\in E(G)$), then $b_1$ ($b_2$) lies on a shortest path between two vertices $u,u_2$ from $D$, which implies that $b_1$ ($b_2$) is in $D$, a contradiction. 
	If $u_1b_1,u_1b_2 \notin E(G)$, then vertices $a_1,a_2,b_1,b_2,u,u_1$ induce a graph $B_1$ from Figure~\ref{fig:Forbidden}, which implies that $G$ is not a chordal dominating pair graph, a contradiction. If $u_1$ is adjacent to one vertex from $\{b_1,b_2\}$, say $b_1$, then $a_1,a_2,b_1,b_2,u,u_1,u_2$ induce  a graph $B_2$, a contradiction. Finally, if $u_1$ is adjacent to both vertices $b_1$ and $b_2$, then $a_1,a_2,b_1,b_2,u_1,u_2$ induce a graph $B_1$, a contradiction.
\end{claimproof}

From Claim~\ref{l:linearOreder} we derive that there exists a vertex $b\in B_1$ that is adjacent to all vertices from $A$, otherwise a set $S$ (i.e., a CD-set containing $\{u,y\}$, where $u \in N(x)$, and $x\notin S$) does not exist. Assuming that $S$ exists, Claim~\ref{l:linearOreder} implies that $D \cup \{b\}$ is a dominating set of $G$ for some $b\in B_1$. In addition, for one such $b\in B_1$ that dominates $A$ (such vertex may not be unique) we have $b\in S$. Therefore, as $\{u,y,b\}\subseteq S$, we infer $CH(\{u,y,b\})\subseteq CH(S)=S$. We conclude this case by noting that $R=\{u,y,b\}$ is a set of $G$ with at most four vertices (actually, with three vertices) such that $CH(R)\subseteq S$, and $CH(R)$ is a CD-set.

\smallskip

\noindent {\bf Case 1.2} $S$ is a CD-set containing $\{x,z\}$, where $z \in N(y)$, and $y \notin S$.

\smallskip

This case can be resolved in the same way as Case 1.1, by changing the roles of $x$ and $y$.

\smallskip

\noindent {\bf Case 1.3} $S$ is a CD-set containing $\{u,z\}$, where $u\in N(x), z \in N(y)$, and $x,y \notin S$.

\smallskip  

Let $D=CH(\{u,z\})$. Clearly, $D\subseteq S$.

\begin{claim}\label{l:AnotDomin1}
	Vertices in $V(G)\setminus (N(x) \cup N(y))$ are dominated by $D$.
\end{claim}
\begin{claimproof}
	Let $P$ be any shortest $u,z$-path of $G$. Since $D$ is a convex set containing $\{u,z\}$, $P$ is contained in $D$. As $(x,y)$ is a dominating pair, $\{x,y\}\cup V(P)$ consists of the vertices of an $x,y$-path and is thus a dominating set of $G$. Therefore $V(P)$ dominates all vertices of $G$ except perhaps some vertices from $N(x) \cup N(y)$. 
\end{claimproof}

Let $A$ be the set of vertices in $N(x)$ that are not dominated by $D$ and let $B\subseteq N(x)$ be the set of vertices not in $D$ but dominated by $D$, i.e.,\ $B=N(x)\cap (N[D]-D)$. Let $A'$ be the set of vertices in $N(y)$ that are not dominated by $D$ and let $B'\subseteq N(y)$ be the set of vertices not in $D$ but dominated by $D$, i.e.,\ $B'=N(y)\cap (N[D]-D)$. Note that $(A \cup B) \cap (A' \cup B')=\emptyset$, as $d(x,y) \geq 3$. Furthermore, let $L=V(G)-N[x]-N[y]-D$.

If $A=\emptyset=A'$, then $D=CH(\{u,z\})$ itself is a convex dominating set $S$ containing $\{u,z\}$, and we may take $R$ to be $\{u,z\}$ (where following the above notation $R$ is a set with at most 4 vertices such that $CH(R)\subseteq S$).

Without loss of generality we may assume one of $A,A'$ is not empty, and let $A\neq \emptyset$. We will establish some properties of sets $A,B, A',B'$.

\begin{claim}\label{l:Aisolated1}
	Let $A\neq \emptyset$, and $a$ is an arbitrary vertex from $A$. If $l \in L \cup D \cup N[y]$, then $al \notin E(G)$.
\end{claim}
\begin{claimproof}
	As $A$ is a set of vertices not dominated by $D$, $a$ has no neighbors in $D$. Since $d(x,y) \geq 3$, $ay \notin E(G)$. Suppose that $al\in E(G)$ for $l\in L\cup N(y)$. First let $l \in L \cup B'$, and let $d$ be a neighbor of $l$ in $D$, and $P$ a shortest $d,u$-path in $G$. Since $D$ is convex, $P \subseteq D$. Let $u'$ be the last neighbor of $l$ on $P$, and let $Q$ be the $u',u$-subpath of $P$. Since $au\notin E(G)$ and $xl\notin E(G)$, we derive that $x,a,l,Q,x$ is an induced cycle of length at least 4, a contradiction with $G$ being chordal. Finally, let $l \in A'$ and let $P$ be a shortest $z,u$-path, which is clearly contained in $D$. Note that $l$ has no neighbors on $P$, as $l \in A'$. Therefore the graph induced by $x,a,l,y,V(P)$ contains an induced cycle of length at least 4, a contradiction with $G$ being chordal.
\end{claimproof}

In the same way we can prove the following assertion:
\begin{claim}\label{l:A'isolated1}
	Let $A\neq \emptyset$, and $a$ is an arbitrary vertex from $A'$. If $l \in L \cup D \cup N[x]$, then $al \notin E(G)$.
\end{claim}

\begin{claim}\label{l:aNotInD1}
	If $a\in A \cup A'$, then $a\notin S$. 
\end{claim} 
\begin{claimproof}
We may assume without loss of generality that $a \in A$. Since $a$ is not dominated by $D$, $a$ is not adjacent to $u$. Therefore any convex set that contains $D \cup \{a\}$, also contains $x$, a contradiction with $x\notin S$. (The proof is analogous if $a\in A'$.)
\end{claimproof}

Since $A \neq \emptyset$ and $S$ is a dominating set, we have $N[A]\cap S\neq\emptyset$. From Claims~\ref{l:Aisolated1} and~\ref{l:aNotInD1} (and since $x\notin S$), we derive that $N[A]\cap S\subseteq B$. Let $B_1=N[A]\cap S$. (Clearly, $B_1\subseteq B$.) Following the same idea, let $B_1'=N[A'] \cap S.$ (If $A'=\emptyset$, then $B_1'=\emptyset$.)

\begin{claim}\label{l:Bclique1}
The set $B_1 \cup (D\cap N(x))$ is a clique.
\end{claim}
\begin{claimproof}
	Suppose that $b_1$ and $b_2$ are two nonadjacent vertices from $B_1 \cup (D\cap N(x))$. Then $b_1,x,b_2$ is a shortest $b_1,b_2$-path which implies that $x \in CH(D \cup B_1)(\subseteq S$), a contradiction.
\end{claimproof}

The following claim can be proved in the same way as Claim~\ref{l:Bclique1}.

\begin{claim}\label{l:B'clique1}
The set $B_1' \cup (D\cap N(y))$ is a clique.
\end{claim}

\begin{claim}\label{l:linearOreder1}
The sets from the family $\{N_{<A>}(b_i)\,:\, b_i \in B_1\}$ are linearly ordered with respect to inclusion.
\end{claim}
\begin{claimproof}
	Suppose that there exist $b_1,b_2 \in B_1$ such that $N_{<A>}(b_1) \nsubseteq N_{<A>}(b_2)$ and $N_{<A>}(b_2) \nsubseteq N_{<A>}(b_1)$. Therefore there exist $a_1,a_2\in A$ such that $a_1b_1,$ $a_2b_2 \in E(G)$, and $a_2b_1,a_1b_2 \notin E(G)$. It follows from Claim~\ref{l:Bclique1} that $b_1u, b_2u,$ $b_1b_2 \in E(G)$.
	Note that $a_1a_2 \notin E(G)$, as $G$ is chordal.  Furthermore, Claim~\ref{l:Aisolated1} implies that $a_1u,a_2u\notin E(G)$. Let $P=u,u_1,\ldots , u_k,z$ be a shortest $u,z$-path in $G$. As $D$ is convex, $P \subseteq D$. 
	
	Suppose first that $d(u,z) \geq 2$. Note that $u_2$ is not adjacent to any of $b_1,b_2$. Indeed, if $u_2b_1 \in E(G)$ ($u_2b_2\in E(G)$), then $b_1$ ($b_2$) lies on a shortest path between two vertices $u,u_2$ from $D$, which implies that $b_1$ ($b_2$) is in $D$, a contradiction. 
	If $u_1b_1,u_1b_2 \notin E(G)$, then vertices $a_1,a_2,b_1,b_2,u,u_1$ induce a graph $B_1$ from Figure~\ref{fig:Forbidden}, which implies that $G$ is not a chordal dominating pair graph, a contradiction. If $u_1$ is adjacent to one vertex from $\{b_1,b_2\}$, say $b_1$, then $a_1,a_2,b_1,b_2,u,u_1,u_2$ induce  a graph $B_2$, a contradiction. Finally, if $u_1$ is adjacent to both vertices $b_1$ and $b_2$, then $a_1,a_2,b_1,b_2,u_1,u_2$ induce a graph $B_1$, a contradiction.
	
	Finally, let $uz \in E(G)$. In this case $d(x,y) = 3$, hence $y$ is not adjacent to any of $a_1,a_2,b_1,b_2,u$. Again if $zb_1,zb_2 \notin E(G)$, then vertices $a_1,a_2,b_1,b_2,u,z$ induce a graph $B_1$ from Figure~\ref{fig:Forbidden}, which implies that $G$ is not a chordal dominating pair graph, a contradiction. If $z$ is adjacent to one vertex from $\{b_1,b_2\}$, say $b_1$, then $a_1,a_2,b_1,b_2,u,z,y$ induce  a graph $B_2$, a contradiction. Finally, if $u_1$ is adjacent to both vertices $b_1$ and $b_2$, then $a_1,a_2,b_1,b_2,z,y$ induce a graph $B_1$, a contradiction.
\end{claimproof}

In the same way the following assertion can be proved (note that if $A'=\emptyset$, the family in the assertion is also empty.)

\begin{claim}\label{l:linearOreder1'}
The sets from the family $\{N_{<A'>}(b_i)\,:\, b_i \in B_1'\}$ are linearly ordered with respect to inclusion.
\end{claim}

Note that if $A'=\emptyset$, then $B_1'=\emptyset$. We resolve this case in a similar (simplified) way, as the case when both $A,A'$ are non-empty, which we consider next. 

From Claims~\ref{l:linearOreder1} and \ref{l:linearOreder1'} we derive that there exist vertices $b\in B_1$, $b'\in B_1'$ such that $b$ is adjacent to all vertices from $A$ and $b'$ is adjacent to all vertices from $A'$, otherwise a set $S$ (i.e., a CD-set containing $\{u,z\}$, where $u \in N(x), z\in N(y)$, and $x,y\notin S$) does not exist. 
Assuming that $S$ exists, Claims~\ref{l:linearOreder1} and \ref{l:linearOreder1'} imply that $D \cup \{b,b'\}$ is a dominating set of $G$ for some $b\in B_1,b'\in B_1'$. In addition, for one such pair $(b,b')\in B_1 \times B_1'$ that dominates $A \cup A'$ (such pair may not be unique) we have $\{b,b'\}\in S$. Therefore, as $\{u,z,b,b'\}\subseteq S$, we infer $CH(\{u,z,b,b'\})\subseteq CH(S)=S$. We conclude this case by noting that $R=\{u,z,b,b'\}$ is a set of $G$ with four vertices such that $CH(R)\subseteq S$, and $CH(R)$ is a CD-set.

\medskip

\noindent {\bf Case 2} $d(x,y)=2$.

\smallskip

Let $U = N(x)\cap N(y)$, $X = N(x)\setminus U$, $ Y= N(y)\setminus U$, $W = V(G) \setminus (N[x]\cup N[y])$, $X_U = X \cap N(U)$, $Y_U = Y \cap N(U)$, $X' = X \setminus X_U$ and $Y' = Y \setminus Y_U$.
Since $d(x,y)=2$ it is clear that $U \neq \emptyset$. 

First we will prove some claims about the structure of the graph $G$.
\begin{claim}\label{l:Uclique}
	The subgraph of $G$ induced by $U$ is a complete graph.
\end{claim}
\begin{claimproof}
	Let $u,u'$ be arbitrary vertices from $U$. Since $G$ is chordal, the 4-cycle $x,u,y,u',x$ is not induced. Therefore $uu'\in E(G)$. 
\end{claimproof}

\begin{claim}\label{uDominates}
	If $u \in U$ and $w \in W$, then $uw \in E(G)$.
\end{claim}
\begin{claimproof}
	Since $(x, y)$ is a dominating pair of $G$, $\{x,u,y\}$ is a dominating set. Thus $u$ dominates all vertices from $W$. 
\end{claimproof}

\begin{claim}\label{l:2dominates}
	There exist at most two vertices in $U$ that dominate $N[U]$.
\end{claim}
\begin{claimproof}
	By Claim~\ref{l:Uclique}, $\left\langle  U \right\rangle$ is a complete graph. Hence, every $u \in U$ dominates $U$. If $|U|\leq 2$, it is clear, that there exist at most two vertices that dominate $N[U]$. Therefore let $|U| > 2$. Suppose that $N[U] \nsubseteq N(u)\cup N(v)$ for any pair $u,v \in U$. Hence, there exist $u_1,u_2,u_3 \in U$, $a_1,a_2,a_3 \in N[U]\setminus U$ such that $u_ia_i\in E(G)$ for any $i\in \{1,2,3\}$, and $a_iu_j \notin E(G)$ for any $i,j \in \{1,2,3\},i \neq j$. But then the graph induced by $a_1,a_2,a_3,u_1,u_2,u_3$ is either isomorphic to the forbidden induced subgraph $B_1$ from Figure~\ref{fig:Forbidden} or it contains an induced cycle of length at least 4, a contradiction.
\end{claimproof}

\begin{claim}\label{l:linearU}
	If $X' \neq \emptyset$ (resp. $Y' \neq \emptyset$), then the sets from the family $\{N(u)\cap Y\,:\, u \in U\}$ (resp. $\{N(u)\cap X\,:\, u \in U\}$) are linearly ordered with respect to inclusion.
\end{claim} 
\begin{claimproof}
	Let $X' \neq \emptyset$ and $x' \in X'$.
	By Claim~\ref{l:Uclique}, $\left\langle  U \right\rangle$ induces a complete graph. If $|U|= 1$, the claim holds. Therefore let $|U| \geq 2$. 
	Suppose that there exist $u_1, u_2 \in U$ such that $N(u_1)\cap Y\nsubseteq N(u_2)\cap Y$ and $N(u_2)\cap Y\nsubseteq N(u_1)\cap Y$. 
	Let $a_1 \in (N(u_1)\cap Y)\setminus N(u_2)$ and $a_2 \in (N(u_2)\cap Y)\setminus N(u_1)$.
	But then the graph induced by $a_1,a_2,x',u_1,u_2,x$ is either isomorphic to the forbidden induced subgraph $B_1$ from Figure~\ref{fig:Forbidden} or it contains an induced cycle of length at least 4, a contradiction.
	In the same way we can prove the claim for the family $\{N(u)\cap X\,:\, u \in U\}$, if $Y' \neq \emptyset$.
\end{claimproof}

\begin{claim}\label{l:neigh:X'}
	Let $x'$ be an arbitrary vertex from $X'$. If $x'a\in E(G)$, then $a \in X \cup \{x\}$.
\end{claim} 
\begin{claimproof}
	It follows from the definition of $X'$ that $a \notin U$. Suppose that $a \in V(G)\setminus(N[x] \cup Y')$. Then there exists $u \in U$ such that vertices $x',a,u,x,x'$ induce a 4-cycle, a contradiction. If $a \in Y'$, then the graph induced by $x,x',a,y,u,x$ is the 5-cycle, a contradiction. 
\end{claimproof}

In the same way one can prove the following claim.

\begin{claim}\label{l:neigh:Y'}
	Let $y'$ be an arbitrary vertex from $Y'$. If $y'a\in E(G)$, then $a \in Y \cup \{y\}$.
\end{claim}

\begin{claim}\label{l:XYtoU}
	Let $x' \in X$ and $y' \in Y$. If $x'y' \in E(G)$, then $x'u, y'u \in E(G)$ for all $u \in U$.
\end{claim} 
\begin{claimproof}
	Let $u \in U$. Since $G$ is chordal the 5-cycle $x,x',y',y,u,x$ is not induced. The only possible chords in this cycle are $x'u$ and $y'u$. Hence, $x'u, y'u \in E(G)$. 
\end{claimproof}

\begin{claim}\label{l:XYWtoU}
	Let $z \in X \cup Y$ and $w \in W$. If $zw \in E(G)$, then $uz \in E(G)$ for all $u \in U$.
\end{claim} 
\begin{claimproof}
	Let $u \in U$ and $z \in X$. Since $G$ is chordal the 4-cycle $x,z,w,u,x$ is not induced. The only possible chord in this cycle is $uz$. Hence, $uz \in E(G)$. In a similar way this can proved if $z \in Y$.
\end{claimproof}

We will first prove that if one of the conditions: $|U| \leq 2$, $X' = \emptyset$ or $Y' = \emptyset$ holds, then $\gc(G) \leq 4$, which by Remark~\ref{rem:cg4} implies the assertion of the theorem.
Let $|U|\leq 2$. Clearly, $\{x,y\}\cup U$ is a CD-set, since $x,y$ is a dominating pair and $U$ induces a complete graph (by Claim~\ref{l:Uclique}). Hence $\gc(G) \leq |\{x,y\}\cup U| \leq 4$.
Let $X' = \emptyset$. By Claim~\ref{l:2dominates} there exist $u,v \in U$ that dominate $N[U]$. Since $X = X_U \subseteq N(U) \subseteq N[U]$, $u,v$ dominate $X$. We claim that $\{u,v,y\}$ is a CD-set. Vertices in $W$ are dominated by $u,v$ and vertices in $N[y]$ by $y$. As $\left\langle  \{u,v,y\} \right\rangle$ is a clique, the set $\{u,v,y\}$ is a convex set. Hence, $\{u,v,y\}$ is a CD-set and $\gc(G) \leq 3$.
In the same way it can be proven for $Y' = \emptyset$, by changing the roles of $x$ and $y$.

Now we may restrict our attention to graphs $G$, where $|U| > 2$, $X' \neq \emptyset$ and $Y' \neq \emptyset$.
As in Case 1, let $S$ be a CD-set of $G$ with $\{x,y\}\nsubseteq S$.  Lemma~\ref{l:neighbours} implies that $S$ contains at least one vertex from $N[x]$ and at least one vertex from $N[y]$. We distinguish the following four cases:

\begin{enumerate}
	\item $S \cap U \neq \emptyset$ and $y \in S$, $x \notin S$.
	\item $S \cap U \neq \emptyset$ and $x \in S$, $y \notin S$.
	\item $S \cap U \neq \emptyset$ and $x,y \notin S$.
	\item $S \cap U = \emptyset$.
\end{enumerate}

To conclude the proof of the theorem for graphs where $d(x,y)=2$ it suffices to prove that in each of the above cases there exists a set of vertices $R$ with $|R|\le 4$ such that $CH(R)\subseteq S$ and $CH(R)$ is a CD-set of $G$.

\smallskip

\noindent {\bf Case 2.1} Let $S \cap U \neq \emptyset$ and $y \in S$, $x \notin S$.

\smallskip  

\begin{claim}\label{l:domX}
	Let $S$ be a CD-set of $G$, $x \notin S$ and let $u$ be a vertex in $S\cap U$ with the largest number of neighbors in $X$ among all vertices in $S\cap U$. Then there exists $z \in S \cap N(u) \cap X$ that dominates $X\setminus N(u)$.
\end{claim} 
\begin{claimproof}
	Let $S$ be a CD-set of $G$, where $x \notin S$ and let $u \in S\cap U$ be a vertex with the largest number of neighbors in $X$ among all vertices in $S\cap U$. First notice that $X' \subseteq X\setminus N(u)$. Hence, $X\setminus N(u) \neq \emptyset$.
	
	Since $S$ is a dominating set, for each $z' \in X\setminus N(u)$ there exists $s \in S$ for which $z's \in E(G)$.
	First, we will prove that such an $s$ is in $X$. Clearly, $s \neq x$ and $s \neq y$. If $s \in Y \cup W$, then by Claims~\ref{l:XYtoU} and \ref{l:XYWtoU}, $z'u \in E(G)$. Hence, $z' \in N(u)$, a contradiction. If $s \in U$, then $N(s)\cap X \nsubseteq N(u)\cap X$, a contradiction with Claim~\ref{l:linearU} and $u$ being a vertex in $S\cap U$ with the largest number of neighbors in $X$ among all vertices in $S\cap U$.
	Hence, $X\setminus N(u)$ is dominated, with respect to $S$, just by vertices in $S \cap X$.
	
	Next, we claim that $S \cap X \subseteq N(u) \cap X$. Suppose, that there exists $s \in S \cap (X \setminus N(u))$.
	Since $sx,xu \in E(G)$ and $su \notin E(G)$, $x$ is on a shortest $s,u$-path. Hence, $x \in S$, a contradiction.
	Therefore $X\setminus N(u)$ is dominated, with respect to $S$, just by vertices in $S \cap N(u) \cap X$.
	
	Suppose that there is no $z \in S \cap N(u) \cap X$ that dominates all vertices in $X \setminus N(u)$.
	Since $S$ is a dominating set, there exist $x_1, x_2 \in S \cap N(u) \cap X$ such that $(N(x_1)\cap X) \setminus (N(u) \cup N(x_2)) \neq \emptyset$ and $(N(x_2)\cap X) \setminus (N(u) \cup N(x_1)) \neq \emptyset$. 
	Let $a_1 \in (N(x_1)\cap X) \setminus (N(u) \cup N(x_2))$ and $a_2 \in (N(x_2)\cap X) \setminus (N(u) \cup N(x_1))$.
	Then the graph induced by $a_1,a_2,y,x_1,x_2,u$ is either isomorphic to the forbidden induced subgraph $B_1$ from Figure~\ref{fig:Forbidden} or it contains an induced cycle of length at least 4, a contradiction.
	Hence, there exists $z \in S \cap N(u) \cap X$ that dominates $X\setminus N(u)$.
\end{claimproof}

In a similar way one can prove the following claim.

\begin{claim}\label{l:domY}
	Let $S$ be a CD-set of $G$, $y \notin S$ and let $u$ be a vertex in $S\cap U$ with the largest number of neighbors in $Y$ among all vertices in $S\cap U$. Then there exists $z \in S \cap N(u) \cap Y$ that dominates $Y\setminus N(u)$.
\end{claim} 

Let $u \in S \cap U$ be a vertex in $S \cap U$ with the largest number of neighbors in $X$ among all vertices in $S\cap U$. By Claims~\ref{l:Uclique} and \ref{uDominates}, $u$ dominates $U\cup W \cup \{x,y\}$. Hence, only the vertices in $X \setminus N(u)$ are not dominated by $\{u,y\}$. By Claim~\ref{l:domX}, there exists $z \in S \cap N(u) \cap X$ that dominates $X \setminus N(u)$. Therefore, as $\{z, u, y\}\subseteq S$ is a dominating set, we infer $CH(\{z, u, y\})\subseteq CH(S)=S$. We conclude this case by noting that $R=\{z, u, y\}$ is a set of $G$ with three vertices such that $CH(R)\subseteq S$, and $CH(R)$ is a CD-set.
\smallskip  

\noindent {\bf Case 2.2} Let $S \cap U \neq \emptyset$ and $x \in S$, $y \notin S$.

\smallskip 
The desired assertion can be proven in a similar way as Case 2.1 by changing the roles of $x$ and $y$, and by using Claim~\ref{l:domY}.
\smallskip  

\noindent {\bf Case 2.3} Let $S \cap U \neq \emptyset$ and $x,y \notin S$.

\smallskip
Let $u \in S \cap U$ be a vertex in $S \cap U$ with the largest number of neighbors in $X$ among all vertices in $S\cap U$ and let $v \in S \cap U$ be a vertex in $S \cap U$ with the largest number of neighbors in $Y$ among all vertices in $S\cap U$ (note, that $u$ and $v$ can coincide). By Claims~\ref{l:Uclique} and \ref{uDominates}, $\{u,v\}$ dominates $U\cup W \cup \{x,y\}$. Hence, only vertices in $X \setminus N(u)$ and $Y \setminus N(v)$ are not dominated by $\{u,v\}$. By Claim~\ref{l:domX}, there exists $z_X \in S \cap N(u) \cap X$ that dominates $X \setminus N(u)$ and by Claim~\ref{l:domY}, there exists $z_Y \in S \cap N(v) \cap Y$ that dominates $Y \setminus N(v)$. Therefore, as $\{z_X, u, y, z_Y\}\subseteq S$ is a dominating set, we infer $CH(\{z_X, u, y, z_Y\})\subseteq CH(S)=S$. We conclude this case by noting that $R=\{z_X, u, y, z_Y\}$ is a set of $G$ with at most four vertices (it can happen that $u=v$) such that $CH(R)\subseteq S$, and $CH(R)$ is a CD-set.

\medskip  

\noindent {\bf Case 2.4} Let $S \cap U = \emptyset$.

\smallskip

\begin{claim}\label{l:domXX}
	Let $S$ be a CD-set of $G$, $x,y \notin S$, $S\cap U = \emptyset$, and let there exist $x' \in X \cap S, y' \in Y \cap S$ such that $x'y' \in E(G)$. If $X \setminus N(x') \neq \emptyset$, then there exists $z \in S \cap N(x') \cap X$ that dominates $X\setminus N(x')$.
\end{claim} 
\begin{claimproof}
	Let $S$ be a CD-set of $G$ as described above and $X \setminus N(x') \neq \emptyset$. By Claim~\ref{l:XYtoU}, $x'u,y'u \in E(G)$ for all $u \in U$.
	Since $S$ is a dominating set, for each $z' \in X\setminus N(x')$ there exists $s \in S$ for which $z's \in E(G)$.
	First, we will prove that such an $s$ is in $X$. Clearly, $s \notin U \cup \{x, y\}$. If $s \in Y \cup W$, then by Claims~\ref{uDominates}, \ref{l:XYtoU} and \ref{l:XYWtoU}, $z'u,su \in E(G)$ for all $u \in U$. Since $S\cap U = \emptyset$, $x's \in E(G)$.
	But then $x,x',s,z',x$ induces a 4-cycle, a contradiction.
	Hence, $X\setminus N(x')$ is dominated, with respect to $S$, just by vertices in $S \cap X$.
	
	Next, we will prove that $S \cap X \subseteq N(x') \cap X$. Suppose, that there exists $s \in S \cap (X \setminus N(x'))$.
	Since $sx,xx' \in E(G)$ and $sx' \notin E(G)$, $x$ is on a shortest $s,x'$-path. Hence, $x \in S$, a contradiction.
	Therefore $X\setminus N(x')$ is dominated, with respect to $S$, just by vertices in $S \cap N(x') \cap X$.
	
	Suppose that there is no $z \in X\cap N(x')\cap S$ that dominates all vertices in $X \setminus N(x')$.
	Since $S$ is a dominating set, there exist $x_1, x_2 \in S \cap N(x') \cap X$ such that $(N(x_1)\cap X) \setminus (N(x') \cup N(x_2)) \neq \emptyset$ and $(N(x_2)\cap X) \setminus (N(x') \cup N(x_1)) \neq \emptyset$. 
	Let $a_1 \in (N(x_1)\cap X) \setminus (N(x') \cup N(x_2))$ and $a_2 \in (N(x_2)\cap X) \setminus (N(x') \cup N(x_1))$.
	Since $x \notin S$, $x_1x_2 \in E(G)$.
	Notice, that $a_1, a_2 \in X\setminus N(x')$. Hence, $a_1y',a_2y' \notin E(G)$ (vertices in $X\setminus N(x')$ are dominated, with respect to $S$, just by vertices in $S \cap X$).
	Now, we distinguish three cases: 
	\begin{enumerate}
		\item $x_1y', x_2y' \notin E(G)$
		\item $x_1y' \in E(G)$ and $x_2y' \notin E(G)$, or $x_2y' \in E(G)$ and $x_1y' \notin E(G)$.
		\item $x_1y', x_2y' \in E(G)$
	\end{enumerate}
	If $x_1y', x_2y' \notin E(G)$, then the graph induced by $a_1,a_2,y',x_1,x_2,x'$ is either isomorphic to the forbidden induced subgraph $B_1$ from Figure~\ref{fig:Forbidden} or it contains an induced cycle of length at least 4, a contradiction.
	
	If $x_1y' \in E(G)$ and $x_2y' \notin E(G)$, then the graph induced by $a_1,a_2,y,x_1,x_2,$ $y',x'$ is either isomorphic to the forbidden induced subgraph $B_2$ from Figure~\ref{fig:Forbidden} or it contains an induced cycle of length at least 4, a contradiction. The case when $x_2y' \in E(G)$ and $x_1y' \notin E(G)$, yields a contradiction in a similar way (by changing the roles of $x_1$ and $x_2$).
	
	Finally, suppose $x_1y', x_2y' \in E(G)$. In this last case the graph induced by $a_1,a_2,y,x_1,x_2,y'$ is either isomorphic to the forbidden induced subgraph $B_1$ from Figure~\ref{fig:Forbidden} or it contains an induced cycle of length at least 4, a contradiction.
	
	Hence, there exists $z \in S \cap N(x') \cap X$ that dominates $X\setminus N(x')$.
\end{claimproof}

In the same way one can prove the following claim.

\begin{claim}\label{l:domYY}
	Let $S$ be a CD-set of $G$, $x,y \notin S$, $S\cap U = \emptyset$, and there exists $x' \in X \cap S, y' \in Y \cap S$ such that $x'y' \in E(G)$. If $Y \setminus N(y') \neq \emptyset$, then there exists $z \in S \cap N(y') \cap Y$ that dominates $Y\setminus N(y')$.
\end{claim} 

First, we will prove that $x, y \notin S$. Suppose that $y \in S$. Hence, $x \notin S$. Since $S$ is a dominating set and $x$ has to be dominated, there exists $x' \in X \cap S$. We claim that $d(x',y) = 2$. Indeed, if $d(x',y) > 2$, then $x$ is on a shortest $x',y$-path, a contradiction with $x \notin S$. Therefore there exists $y' \in Y$ such that $x'y' \in E(G)$ (otherwise $S \cap U \neq \emptyset$, a contradiction). By Claim~\ref{l:XYtoU}, $x'u, y'u \in E(G)$ for all $u \in U$. Hence, $u$ is on a shortest $x',y$-path, a contradiction with $S \cap U = \emptyset$. 
The same way we can prove that $x \notin S$.

Next, we will prove that there exist $x' \in X \cap S, y' \in Y \cap S$ such that $x'y' \in E(G)$. Suppose that $x'y' \notin E(G)$ for any $x' \in X \cap S, y' \in Y \cap S$. Let $x' \in X \cap S$ and $y' \in Y \cap S$ (they exist by Lemma~\ref{l:neighbours}) have the shortest distance among all such pairs. Let $P$ be a shortest $x',y'$-path. Clearly, $P \subseteq S$ and $P \subseteq X \cup Y \cup W$. Hence, $P = x', w_1, w_2,\ldots, w_k, y'$ where $k\geq 1$ and $w_i \in W$ for all $i \in {1,\ldots,k}$. Let $u \in U$. Since $G$ is chordal the cycle $x, x', w_1, u, x$ is not induced. The only possible chord in this cycle is $ux'$, hence, $ux' \in E(G)$. The same way we can prove that $uy' \in E(G)$, as $uy'$ is necessarily a chord in the cycle $y, y', w_k, u, y$. Therefore $u$ is on a shortest $x',y'$-path, a contradiction with $S \cap U = \emptyset$.

Let $x' \in X \cap S, y' \in Y \cap S$ such that $x'y' \in E(G)$.
By Claim~\ref{l:XYtoU}, $x'u, y'u \in E(G)$ for all $u \in U$. 
Hence, $U$ is dominated by $\{x', y'\}$. Since $x,y$ is a dominating pair, the path $x,x',y',y$ dominates all vertices of $G$. Therefore, $\{x', y'\}$ dominates $W$. This implies that just vertices from $X$ and $Y$ need not be dominated by $\{x', y'\}$.

If $X$ and $Y$ are dominated by $\{x', y'\}$, then $\{x', y'\}$ is a CD-set and $\gc(G) \leq 2$, which by Remark~\ref{rem:cg4} implies the assertion of the theorem.

Suppose that $X$ is not dominated by $\{x', y'\}$ and $Y$ is dominated by $\{x', y'\}$.
By Claim~\ref{l:domXX} there exists $z \in X\cap N(x')\cap S$ that dominates all vertices in $X \setminus N(x')$.
Therefore, as $\{z, x', y'\}\subseteq S$ is a dominating set, we infer $CH(\{z, x', y'\})\subseteq CH(S)=S$. We conclude this case by noting that $R=\{z, x', y'\}$ is a set of $G$ with three vertices such that $CH(R)\subseteq S$, and $CH(R)$ is a CD-set.

The same way it can be proven, that if $X$ is dominated by $\{x', y'\}$ and $Y$ is not, then by Claim~\ref{l:domYY} there exists $z \in Y\cap N(y')\cap S$, that dominates all vertices in $Y \setminus N(y')$. In this case $R=\{x', y', z\}$.
Finally, if $X$ and $Y$ are not dominated by $\{x', y'\}$, then by Claim~\ref{l:domXX} there exist $z_X \in X\cap N(x')\cap S$ that dominates all vertices in $X \setminus N(x')$, and by Claim~\ref{l:domYY} there exists $z_Y \in Y\cap N(y')\cap S$, that dominates all vertices in $Y \setminus N(y')$. In this case $R=\{z_X, x', y', z_Y\}$.
\end{proof}

\begin{corollary}
	Let $G$ be a chordal dominating pair graph. Then a minimum convex dominating set can be found in polynomial time.
\end{corollary}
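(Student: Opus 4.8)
The plan is to combine Theorem~\ref{thm:main} with a brute-force enumeration over all small vertex sets. By Theorem~\ref{thm:main}, there exists a set $R\subseteq V(G)$ with $|R|\le 4$ whose convex hull $CH(R)$ is a minimum convex dominating set of $G$, i.e.\ $|CH(R)|=\gc(G)$. Since the number of subsets of $V(G)$ of size at most $4$ is $O(|V(G)|^4)$, it suffices to show that, for a fixed such $R$, one can compute $CH(R)$ in polynomial time and then check in polynomial time whether $CH(R)$ is a dominating set. Note that convexity of $CH(R)$ is automatic (it is by definition a convex set), so only the domination property must be tested.

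The first step is to compute all pairwise distances in $G$, which is done by running breadth-first search from each vertex in total time $O(|V(G)|\cdot(|V(G)|+|E(G)|))$. With these distances available, for any two vertices $u,v$ the \emph{interval} $I(u,v)=\{w\in V(G):d(u,w)+d(w,v)=d(u,v)\}$, consisting of all vertices lying on some shortest $u,v$-path, can be read off in constant time per candidate vertex.

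The second step is to compute the convex hull by iterated interval closure. Starting from $S_0=R$, I would repeatedly set $S_{i+1}=\bigcup_{u,v\in S_i} I(u,v)$ until $S_{i+1}=S_i$; the resulting fixed point is exactly the smallest convex set containing $R$, that is, $CH(R)$. Each iteration costs $O(|V(G)|^3)$, and since every non-stabilizing iteration adds at least one new vertex, the process halts after at most $|V(G)|$ iterations, so $CH(R)$ is obtained in polynomial time. Testing whether $CH(R)$ dominates $G$ is then a linear-time scan of closed neighborhoods.

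Finally, I would iterate this procedure over all $R\subseteq V(G)$ with $|R|\le 4$, computing $CH(R)$, discarding those that fail to dominate, and keeping the smallest CD-set found; by Theorem~\ref{thm:main} its cardinality equals $\gc(G)$, so this set is a minimum convex dominating set. The only point requiring genuine care---the heart of the argument---is the polynomiality of the convex-hull computation, but this follows from the interval-closure description above once all-pairs distances are in hand; the remainder is routine enumeration and a domination check.
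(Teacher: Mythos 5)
Your proposal is correct and follows essentially the same route as the paper: enumerate all $O(|V(G)|^4)$ sets $R$ with $|R|\le 4$, compute $CH(R)$ in polynomial time, test domination, and return the smallest dominating hull, with correctness guaranteed by Theorem~\ref{thm:main}. The paper leaves the polynomial-time computability of the convex hull as a remark, whereas you spell it out via iterated interval closure over all-pairs shortest-path distances; this is a standard and valid refinement, not a different argument.
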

\begin{proof}
	We present an algorithm that finds a minimum convex dominating set of a chordal dominating pair graph $G$.
	\begin{enumerate}
		\item Compute convex hulls of all $R\subseteq V(G)$ with $|R|\le 4$.
		\item For each convex hull check whether it is a dominating set.
		\item From all convex hulls that are dominating sets choose the smallest one.
	\end{enumerate} 
	Since Theorem~\ref{thm:main} implies that there exists a set $R\subseteq V(G)$ with $|R|\le 4$ such that $CH(R)$ is a minimum convex dominating set, the above algorithm finds minimum convex dominating set of a chordal dominating pair graph.
	
	The complexity of the algorithm is polynomial. Indeed, for $n=|V(G)|$ there are $O(n^4)$ subsets $R\subseteq V(G)$ with $|R|\le 4$; the convex hull of a subset of $V(G)$ can be computed in polynomial time; and checking if a set is a dominating set can be also realized in polynomial time. 
	
\end{proof}

\section{Isometric domination of weak dominating pair graphs}
\label{sec:iso}

In this section we give the polynomial time algorithm to determine the isometric domination number of a weak dominating pair graph. 

The following lemma is the first step towards the proof of the main result in this section, and follows from definitions of the involved parameters.

\begin{lemma}\label{l:ista}
	Let $G$ be a weak dominating pair graph and let $(x,y)$ be a dominating pair. Then any isometric dominating set $S$ of $G$ contains at least one vertex from $N[x]$ and at least one vertex from $N[y]$. In addition, if $x$ (respectively, $y$) belongs to $S$, then $S$ contains a vertex in $N(x)$ (respectively, $N(y)$).
\end{lemma}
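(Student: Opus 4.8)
The plan is to prove each assertion directly from the definitions, using the fact that an isometric dominating set is in particular a dominating set together with the defining property of a dominating pair. First I would recall that since $(x,y)$ is a dominating pair, every $x,y$-path is a dominating set of $G$; in particular, $d_G(x,y)$ is finite and there exists at least one shortest $x,y$-path $P$, whose vertex set is a dominating set. This observation is the engine behind both halves of the lemma.

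For the first assertion, let $S$ be any isometric dominating set of $G$. Since $S$ dominates $G$, the vertex $x$ lies in $N[S]$, which immediately means there is a vertex of $S$ in $N[x]$ (either $x\in S$, or some neighbor of $x$ lies in $S$). By the symmetric argument applied to $y$, the set $S$ also contains a vertex from $N[y]$. This part is essentially immediate and requires no use of isometry or of the dominating pair structure — only that $S$ is a dominating set — which matches the parenthetical remark in the text that the analogous Lemma~\ref{l:neighbours} "follows directly from the fact that a convex dominating set is a dominating set."

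The second assertion is the only place where a small amount of extra care is needed. Suppose $x\in S$; I must show $S$ contains some vertex of $N(x)$, i.e.\ $S\neq\{x\}$ and moreover $S$ meets the open neighborhood of $x$. The key point is that $x\neq y$ cannot be assumed a priori, but if $S=\{x\}$ then $x$ would be a universal vertex dominating all of $G$, and since $(x,y)$ is a dominating pair with $x$ dominating everything, one still needs a neighbor of $x$ to be selected whenever $G$ has more than one vertex. Concretely, because $S$ is a dominating set and we want a vertex of $S$ strictly inside $N(x)$, I would argue that if no vertex of $S\setminus\{x\}$ were adjacent to $x$, then every vertex of $S$ other than $x$ is at distance $\geq 2$ from $x$; taking any $s\in S$ with $s\neq x$ and using that $S$ is isometric, a shortest $x,s$-path lying in $S$ would pass through a neighbor of $x$ that also lies in $S$, a contradiction unless $S=\{x\}$. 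Thus either $S=\{x\}$ (the degenerate case, handled by noting $G$ has other vertices that $x$ must dominate, forcing reconsideration) or $S$ contains a neighbor of $x$.

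The step I expect to be the main (albeit minor) obstacle is the degenerate boundary case $S=\{x\}$ in the second assertion: one has to rule it out or interpret it correctly, since the statement implicitly presumes $S$ has a vertex in the \emph{open} neighborhood $N(x)$. The clean way to handle it is to observe that if $x\in S$ but $S$ contains no vertex of $N(x)$, then $x$ is an isolated vertex of the subgraph $\langle S\rangle$, so by the isometry condition $S=\{x\}$ (any other $s\in S$ would force a shortest $x,s$-path inside $S$ through $N(x)$); but then the pair $(x,y)$ together with the hypothesis that $G$ is a connected graph with at least two vertices forces $y$ to have a neighbor, and the isometric domination of $G$ by a single vertex means $x$ is universal, whence $N(x)\neq\emptyset$ and any dominating isometric set minimally containing $x$ must in fact include a neighbor once $G$ is nontrivial. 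I would phrase the final write-up so as to dispatch this case in one sentence, keeping the emphasis on the two one-line domination arguments that carry the substance of the lemma.
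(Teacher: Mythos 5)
Your core argument is correct and is exactly the ``follows from definitions'' reasoning the paper has in mind (the paper in fact gives no written proof of this lemma): the first assertion is immediate from $S$ being a dominating set, and the second follows because for any $s\in S\setminus\{x\}$ the isometry of $S$ puts a whole shortest $x,s$-path inside $S$, whose second vertex lies in $N(x)\cap S$. The one place where your write-up goes astray is the degenerate case $S=\{x\}$: you try to ``rule it out or dispatch it in one sentence,'' but it cannot be ruled out --- if $x$ is a universal vertex then $S=\{x\}$ is a perfectly valid isometric dominating set containing $x$ and no vertex of $N(x)$, so the second assertion is literally false there. Your sentence claiming that ``any dominating isometric set minimally containing $x$ must in fact include a neighbor once $G$ is nontrivial'' is therefore not justifiable. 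The correct way to close this is not to eliminate the case but to observe that it is irrelevant to how the lemma is used: the lemma is only invoked in the proof of the main theorem after the case $\gi(G)\le 4$ has been handled exhaustively, so there $|S|\ge 2$ and your shortest-path argument applies. Apart from this mishandled (and, to be fair, genuinely overlooked-by-the-authors) boundary case, your proof is complete.
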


\begin{lemma}\label{3cases}
Let $G$ be a weak dominating pair graph and let $(x,y)$ be a dominating pair. Then $$d_G(x,y)-1 \leq \gamma_{iso}(G) \leq d_G(x,y)+1.$$
\end{lemma}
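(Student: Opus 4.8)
The plan is to establish the two inequalities separately, both exploiting the dominating pair $(x,y)$ together with the isometric (distance-preserving) structure of the sought set $S$.

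\medskip

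\noindent\textbf{Upper bound $\gamma_{iso}(G)\le d_G(x,y)+1$.}
First I would exhibit an explicit isometric dominating set of size $d_G(x,y)+1$. Let $P$ be any shortest $x,y$-path in $G$, and put $S=V(P)$. Then $|S|=d_G(x,y)+1$. Since $(x,y)$ is a dominating pair, the vertex set of \emph{every} $x,y$-path is dominating, so in particular $V(P)$ is a dominating set. It remains to check that $S=V(P)$ is isometric. This is where I would be careful: I must verify that for any two vertices $p,q$ of $P$ there is a shortest $p,q$-path in $G$ lying inside $V(P)$. The natural candidate is the subpath of $P$ between $p$ and $q$; I would argue that a subpath of a shortest path is itself a shortest path between its endpoints, hence $d_{\langle S\rangle}(p,q)=d_P(p,q)=d_G(p,q)$, which is exactly the isometry condition as stated in the Preliminaries (a subset $S$ is isometric iff $d_G(u,v)=d_H(u,v)$ for $u,v\in S$, where $H=\langle S\rangle$). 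Thus $V(P)$ is an ID-set and the upper bound follows.

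\medskip

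\noindent\textbf{Lower bound $d_G(x,y)-1\le\gamma_{iso}(G)$.}
For the lower bound I would take an arbitrary ID-set $S$ and bound its size from below. By Lemma~\ref{l:ista}, $S$ contains some vertex $a\in N[x]$ and some vertex $b\in N[y]$. Since $S$ is isometric, there is a shortest $a,b$-path $Q$ with $V(Q)\subseteq S$, so $|S|\ge |V(Q)|=d_G(a,b)+1$. The remaining task is to bound $d_G(a,b)$ from below by $d_G(x,y)-2$. This follows from the triangle inequality: since $a\in N[x]$ we have $d_G(x,a)\le 1$ and similarly $d_G(y,b)\le 1$, hence
$$d_G(x,y)\le d_G(x,a)+d_G(a,b)+d_G(b,y)\le d_G(a,b)+2,$$
which gives $d_G(a,b)\ge d_G(x,y)-2$ and therefore $|S|\ge d_G(a,b)+1\ge d_G(x,y)-1$. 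Taking the minimum over all ID-sets $S$ yields $\gamma_{iso}(G)\ge d_G(x,y)-1$.

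\medskip

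\noindent\textbf{Main obstacle.} Both bounds are short, and the arithmetic is routine; the only genuinely delicate point is the isometry verification in the upper bound, namely that the chosen shortest path $P$ induces a subgraph in which distances are preserved. I expect this to reduce cleanly to the ``subpath of a shortest path is shortest'' fact, but one should state it explicitly rather than treat it as obvious, since isometry (not mere convexity) is exactly what the isometric domination number requires. A secondary subtlety is the degenerate case $x=y$ (which the definition of dominating pair permits): there $d_G(x,y)=0$, the lower bound $d_G(x,y)-1=-1$ is vacuous, and the single-vertex path gives the upper bound, so the statement holds trivially and I would note this briefly.
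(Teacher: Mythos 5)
Your proof is correct and follows essentially the same route as the paper: the upper bound via the vertex set of a shortest $x,y$-path (dominating because $(x,y)$ is a dominating pair, isometric because subpaths of shortest paths are shortest), and the lower bound via Lemma~\ref{l:ista} together with the triangle inequality giving $d_G(a,b)\ge d_G(x,y)-2$ for $a\in N[x]\cap S$, $b\in N[y]\cap S$. Your explicit isometry check and the remark on the degenerate case $x=y$ are minor refinements of details the paper leaves implicit.
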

\begin{proof}
If $S$ is a $\gi$-set in a dominating pair graph $G$, then, by Lemma~\ref{l:ista}, $N[x]\cap S\neq \emptyset$ and $N[y]\cap S\neq \emptyset$. Let $a\in N(x)\cap S$ and $b\in N(y)\cap S$. Since $S$ is isometric, a shortest $a,b$-path $P$ is also in $S$, and it is clear that $|V(P)|=d_G(a,b)+1 \ge (d_G(x,y)-2)+1 = d_G(x,y)-1$. We derive that $|S|\ge |V(P)|\ge d_G(x,y)-1$.

To prove the right-hand inequality, note that a shortest $x,y$-path $Q$ is an isometric dominating set of $G$, and so $\gamma_{iso}(G) \leq d_G(x,y)+1$.

\end{proof}

\begin{theorem} The isometric domination number of a weak dominating pair graph in which a dominating pair is given can be computed in polynomial time.
\end{theorem}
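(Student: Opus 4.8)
The plan is to exploit Lemma~\ref{3cases}, which pins $\gamma_{iso}(G)$ to one of the three values $d-1$, $d$, $d+1$, where $d=d_G(x,y)$ is computed by a single breadth-first search. Any shortest $x,y$-path is an isometric set and, by the dominating-pair property, a dominating set; it has $d+1$ vertices, so $\gamma_{iso}(G)\le d+1$ holds automatically. Hence the whole task reduces to two decision problems: is $\gamma_{iso}(G)\le d$, and is $\gamma_{iso}(G)=d-1$? For each I would produce a polynomial certificate, i.e. a concrete ID-set of the target size or a proof that none exists.

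The key fact I would isolate is a \emph{locality} phenomenon for small ID-sets. Let $S$ be an ID-set with $|S|\le d+1$; by Lemma~\ref{l:ista} we may fix $a\in N(x)\cap S$ and $b\in N(y)\cap S$, and isometry forces a shortest $a,b$-path $P\subseteq S$. Concatenating $x,a,P,b,y$ yields an $x,y$-path, so by the dominating-pair property $\{x,y\}\cup V(P)$ dominates $G$; consequently the only vertices $S$ might fail to dominate on its own are those whose sole dominators in $\{x,y\}\cup V(P)$ are $x$ or $y$, and these lie in $N(x)\cup N(y)$. Layering $G$ by distance from $x$, every vertex of $N(x)$ lies in the first layer and hence can be dominated only by vertices of $S$ within distance $2$ of $x$; symmetrically for $N(y)$. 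Thus whether $S$ dominates $G$ is governed entirely by the bounded pieces of $S$ near $x$ and near $y$, while the long middle stretch of $P$ is irrelevant to domination.

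With this in hand I would assemble minimum ID-sets from three pieces: a bounded ``left gadget'' near $x$ (at most the two vertices of $S$ closest to $x$, chosen so as to dominate $N(x)$), a bounded ``right gadget'' dominating $N(y)$, and an arbitrary geodesic joining the two gadgets through the middle. Concretely, for $\gamma_{iso}(G)=d-1$ the set $S$ must be exactly a geodesic $a,\dots,b$ with $a\in N(x)$, $b\in N(y)$ and $d_G(a,b)=d-2$ (so that $x,a,\dots,b,y$ is a genuine shortest $x,y$-path); such an $S$ exists iff there are an edge $v_1v_2$ with $v_1\in N(x)$, $d_G(x,v_2)=2$ and $\{v_1,v_2\}$ dominating $N(x)$, an analogous edge $v_{d-2}v_{d-1}$ with $v_{d-1}\in N(y)$, $d_G(y,v_{d-2})=2$ and $\{v_{d-2},v_{d-1}\}$ dominating $N(y)$, and $d_G(v_2,v_{d-2})=d-4$ so that the middle closes up into a geodesic. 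These are all $O(1)$-vertex choices, of which there are polynomially many, each tested by a distance query and a domination check. The case $\gamma_{iso}(G)\le d$ is handled by the same enumeration over end-gadgets joined by a middle geodesic, now additionally allowing one endpoint to be $x$ or $y$, or one extra vertex, or a geodesic with $d_G(a,b)=d-1$.

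The main obstacle I anticipate is making this case analysis exhaustive rather than merely suggestive. Two points need care. First, Lemma~\ref{l:ista} must be invoked to fold the possibilities $x\in S$ or $y\in S$ into the same gadget framework (its ``in addition'' clause guarantees a neighbour of $x$, respectively $y$, always lies in $S$). Second, and more delicate, are the size-$d$ ID-sets that are geodesics between $a\in N(x)$ and $b\in N(y)$ with $d_G(a,b)=d-1$: these are \emph{not} subpaths of any shortest $x,y$-path, so the clean distance-from-$x$ layering must be replaced by a direct, but still local, domination check at each end. One has to verify that the bounded end-gadget enumeration, together with an arbitrary middle geodesic of the correct length, both captures every isometric dominating set of size at most $d$ and always produces a set that is genuinely isometric -- which holds precisely because the concatenated length equals the distance between the chosen endpoints. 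Once this bookkeeping is complete, the running time is dominated by iterating over polynomially many end-gadget pairs and performing all-pairs shortest-path and domination tests, all of which are polynomial.
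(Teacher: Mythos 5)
Your proposal follows essentially the same route as the paper: brute-force the case $\gamma_{iso}(G)\le 4$, use Lemma~\ref{3cases} to restrict to the three candidate values $d-1,d,d+1$, anchor a geodesic between $N(x)$ and $N(y)$ inside any small ID-set via Lemma~\ref{l:ista}, and observe that such a geodesic dominates everything except possibly parts of $N(x)\cup N(y)$, so that domination depends only on the bounded ends of the path. Your explicit end-gadget enumeration is a more careful justification of the step the paper states tersely (``one can restrict only to vertices $a$ and $b$ and their neighbors''), and the one piece of bookkeeping you flag as delicate in the size-$d$ case is settled in the paper by showing the single extra vertex can always be replaced by $x$ or $y$ (since otherwise an undominated $x'\in N(x)$ and $y'\in N(y)$ sharing a dominator would force $d(x,y)\le 4$).
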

\begin{proof}
Let $G$ be a graph with dominating pair $(x,y)$. If $\gi(G) \leq 4$, $\gi$-set can be found by exhaustively checking all $k$-tuples of vertices, for $k\le 4$, for being ID-sets or not, which can be done in polynomial time. Therefore we may assume that $\gi(G) > 4$, and $d(x,y) > 3$, for otherwise a shortest $x,y$-path is an ID-set in $G$ of length at most 4.

Let $x,x_1,\ldots , x_k,y$ be a shortest $x,y$-path in $G$. Note first that $d_G(a,b) \geq d_G(x_1,x_k)=d_G(x,y)-2$ for any $a \in N(x)$, $b \in N(y)$. 
Let $S$ be minimum ID-set of $G$. By Lemma~\ref{l:ista}, there exist vertices $a\in N(x)\cap S$ and $b\in N(y)\cap S$. Since $S$ is isometric, a shortest $a,b$-path $P$ is also in $S$, and $|V(P)| = d_G(a,b)+1 \ge d_G(x,y)-1$. Now, by Lemma~\ref{3cases}, we have three possibilities. 

Firstly, if $|S|=d_G(x,y)-1$, then $S=V(P)$. 

Secondly, if $|S|=d_G(x,y)$, then two cases occur. If $S=V(P)$, then $d_G(a,b)=d_G(x,y)-1$. The second case is that $V(P)\subsetneq S$. This readily implies that $d_G(a,b)=d_G(x,y)-2$ ($a,b$ lies on a shortest $x,y$-path), and $|S\setminus V(P)|=1$. In this case $V(P)$ dominates all vertices of $G$ except some vertices from $N(x) \cup N(y)$. If also $N(x)$ is dominated by $V(P)$, then $V(P) \cup \{y\}$ is an ID-set. If all vertices from $N(y)$ are dominated by $V(P)$, then $V(P) \cup \{x\}$ is an ID-set. Finally suppose that there exist $\emptyset \neq X' \subseteq N(x)$ that is not dominated by $V(P)$ and $\emptyset \neq Y' \subseteq N(y)$ that is not dominated by V(P). Since $|S \setminus V(P)|=1$, there exists $u \in S \setminus V(P)$ that dominates $X' \cup Y'$. As $S$ is isometric, $u$ is adjacent to at least one vertex $v \in V(P)$. Let $x' \in X'$, $y' \in Y'$. Since $a,b$ lies on a shortest $x,y$-path and $d(x',y')=2$ we get $d(a,b) \leq 2$. Therefore $|S|=d_G(x,y) \leq 4$, a contradiction (with the assumption $\gi(G) > 4$).

Finally, if $|S|=d_G(x,y)+1$, then a shortest $x,y$-path has $d_G(x,y)+1$ vertices, and is an ID-set. 

\medskip 

The above arguments imply that the minimum isometric dominating set can be found in the following way.

Resolving the possibility that $\gi(G) \leq 4$, we start the algorithm by performing exhaustive check of all $k$-tuples of vertices, for $k\le 4$, for being ID-sets or not.
Next, let $s=d_G(x,y)-2$, and we may assume that $s\ge 2$. For any $a\in N(x)$,$b\in N(y)$ compute $d_G(a,b)$. For all pairs $a\in N(x)$,$b\in N(y)$ with $d_G(a,b)=s$ compute all shortest paths between $a$ and $b$, and check whether any of these paths dominates $G$. If there is such a path between a pair $a\in N(x)$,$b\in N(y)$ with $d_G(a,b)=s$ that dominates $G$, then $\gi(G)=s+1=d_G(x,y)-1$. Otherwise, 
if there is some path $P$ between a pair $a\in N(x)$,$b\in N(y)$ with $d_G(a,b)=s$ such that only some vertices in $N(x)$ (respectively, $N(y)$) are not dominated by $V(P)$, then $V(P)\cup\{x\}$ (respectively, $V(P)\cup\{y\}$) is an ID-set of $G$, and $\gi(G)=s+2=d_G(x,y)$. 
Otherwise for all pairs $a\in N(x)$,$b\in N(y)$ with $d_G(a,b)=s+1$ compute all shortest paths between $a$ and $b$, and check whether any of these paths dominates $G$. If there is such a path $P$, then $V(P)$ is an ID-set of $G$, and $\gi(G)=s+2=d_G(x,y)$. Otherwise $\gi(G)=s+3=d_G(x,y)+1$ and a shortest $x,y$-path is an ID-set.

\medskip 
It remains to prove that $\gi$-set can be found in polynomial time.

To compute the distances between pairs of vertices in $N(x)\times N(y)$ is clearly polynomial. For a given pair $(a,b)$ computing all shortest paths is also polynomial. Note that checking whether some of these paths dominates $G$, one can restrict only to vertices $a$ and $b$ and their neighbors in the set of vertices on shortest $a,b$-paths. This can again be done in polynomial time.
\end{proof}

\section*{Acknowledgements}
The authors acknowledge the financial support from the Slovenian Research Agency (research core funding No.\ P1-0297); B.B.\ and T.G.\ also acknowledge the support from the Slovenian Research Agency by the project grant N1-0043.



\begin{thebibliography}{99}
	
\bibitem{acfgm-07} 
 L.~Alc\'{o}n, M.~Cerioli, C.~de Figueiredo, M.~Gutierrez, J.~Meidanis, {\em{Tree loop graphs}}, Discrete Appl. Math. 155 (2007) 686--694. 


\bibitem{BaCh} H.~J.~Bandelt, V.~Chepoi, {\em{Metric graph theory and geometry: a survey}},
  Contemp. Math. 453 (2008) 49--86.
	
\bibitem{BCD} F.~Benevides, V.~Campos, M.~C.~Dourado, R.~M.~Sampaio, A.~Silva, {\em{The maximum infection time in the geodesic and monophonic convexities}}, Theoret. Comput. Sci. 609 (2016) 287--295.
  
\bibitem{bls-99} 
A.~Brandst\"{a}dt, V.~B.~Le, J.~P.~Spinrad, {\em Graphs classes: A survey}, 
SIAM, Philadelphia, 1999.

\bibitem{CDS} E.~M.M.~Coelho,  M.~C.~Dourado, R.~M.~Sampaio, {\em{Inapproximability results for graph convexity parameters}}, Theoret. Comput. Sci. 600 (2015) 49--58.

\bibitem{COS1997} D.~G.~Corneil, S.~Olariu, L.~Stewart, {\em{Asteroidal triple-free graphs}}, SIAM J. Discrete Math. 10 (1997) 399--430.

\bibitem{COS2005} D.~G.~Corneil, S.~Olariu, L.~Stewart, {\em{Computing a dominating pair in an asteroidal triple-free graph in linear time}}, Lecture Notes in Comput. Sci. 955 (2005) 358--368.


\bibitem{DK2002}
  J.~S.~Deogun, D.~Kratsch, {\em{Dominating pair graphs}}, SIAM J.~Discrete Math. 15 (2002) 353--366.
  
\bibitem{DPR}	M.~C.~Dourado, L.~D.~Penso, D.~Rautenbach, {\em{On the geodetic hull number of $P_k$-free graphs}}, Theoret. Comput. Sci. 640 (2016) 52--60.

\bibitem{GJ1979}
  M.~R.~Garey, D.~S.~Johnson, {\em Computers and intractability: A Guide to the Theory of NP-Completeness}, W.~H.~Freeman (1979).
  
\bibitem{hhs1}
T.~W.~Haynes, S.~T.~Hedetniemi, P.~J.~Slater, {\em Domination in {G}raphs. {A}dvanced {T}opics}, Monographs and Textbooks in Pure and Applied Mathematics 209, Marcel Dekker, Inc., New York, 1998.	
  
\bibitem{hhs}
T.~W.~Haynes, S.~T.~Hedetniemi, P.~J.~Slater, {\em Fundamentals of {D}omination in Graphs},
Monographs and Textbooks in Pure and Applied Mathematics 208, Marcel Dekker, Inc., New York,
1998.


\bibitem{karp} R.~M.~Karp, Reducibility Among Combinatorial Problems, In [R.~E. Miller and J.~W. Thatcher (editors), {\em Complexity of Computer Computations}, Plenum, New York], 85--103.

\bibitem{LC2012}  
M.~A.~Labendia, S.~R.~Canoy, {\em{Convex domination in the composition and Cartesian product of graphs}}, Czechoslovak. Math. J. 62(137) (2012) 1003--1009.

\bibitem{LB1962}  
  C.~G.~Lekkerkerker, J.~C.~Boland, {\em{Representation of a finite graph by a set of intervals on the real line}}, Fund. Math. 51 (1962) 45--64.

\bibitem{L2010} M.~Lema\'{n}ska, {\em{Nordhaus-Gaddum results for weak convex domination number of a graph}}, Discuss. Math. Graph Theory 30 (2010) 257--263. 

\bibitem{L2004} M.~Lema\'{n}ska, {\em{Weakly convex and convex domination numbers}}, Opuscula Math. 24 (2004) 181--188. 

\bibitem{LRG2012} M.~Lema\'{n}ska, J.~A.~Rodriguez-Velazquez, I.~Gonzalez Yero, {\em{Nordhaus-Gaddum results for the convex domination number of a graph}}, Periodica Math. Hungarica 65/1 (2012) 125--134.  

\bibitem{mcmc-99} 
T.~A.~McKee, F.~R.~McMorris, {\em Topics in intersection graph theory}, SIAM, Philadelphia, 1999.

\bibitem{PCK2004} N.~Pr\v{z}ulj,  D.~Corneil, E.~K\"{o}hler, {\em{Hereditary dominating pair graphs}}, Discrete~Appl.~Math. 134 (2004) 239--261.   
  
\bibitem{R2004} J.~Raczek, {\em{NP-Completeness of weakly convex and convex dominating set decision problems}}, Opuscula Math. 24 (2004) 189--196.

\bibitem{RL2010} J.~Raczek, M.~Lema\'{n}ska, {\em{A note on the weakly convex and convex domination numbers of a torus}}, Discrete Appl. Math. 158 (2010) 1708--1713.

\bibitem{SW1979} 
E.~Sampathkumar, H.~B.~Walikar, {\em{The Connected Domination Number of a Graph}}, J. Math. Phys. Sci. 13 (1979) 607--613.

\bibitem{vvel-93}
  M.~L.~J.~van de Vel, {\em Theory of Convex Structures},
  North Holland, Amsterdam (1993).
  
\bibitem{wg-1986}
  M.~S.~Waterman, J.~R.~Griggs, Interval graphs and maps of DNA, Bull. Math. Biol. 48 (1986) 189--195.


\end{thebibliography}
\end{document}